\documentclass[12pt]{article}
\usepackage{amsmath, amsthm, amscd, amsfonts, amssymb, graphicx}
\usepackage{titles}
\usepackage{algorithm}
\usepackage{algpseudocode}
\usepackage{subfigure} 
\usepackage{caption}
\usepackage{graphics,graphicx}
\usepackage{amsmath, amssymb, amsthm}
\usepackage{mathrsfs} 
\usepackage{enumitem}

\usepackage{indentfirst}

\numberwithin{equation}{section}
\usepackage{epsfig}
\usepackage{color}
\usepackage{fancyhdr}
\usepackage{titlesec}
\usepackage{tikz}
\usepackage{afterpage}
\usetikzlibrary{arrows}
\usetikzlibrary{decorations.markings}
\tikzstyle{block}=[draw opacity=0.7,line width=1.4cm]
\tikzset{
	big black arrow/.style={
		decoration={markings,mark=at position 1 with {\arrow[scale=2.5,black]{>}}},
		postaction={decorate},
		shorten >=0.4pt},
	line/.style={draw, ->}}

\usepackage{eufrak}
\newtheorem{theorem}{Theorem}

\newtheorem{definition}{Definition}

\newtheorem{corollary}[theorem]{Corollary}
\theoremstyle{definition}

\theoremstyle{remark}

\numberwithin{equation}{section}

\allowdisplaybreaks

\title{\textbf{Relations for partitions with distinct even parts except the largest part which is even}}
\date{}
\newcommand{\qPoch}[3]{(#1;#2)_{#3}}
\begin{document}
\maketitle
\vspace{-2cm}
\maketitle

\begin{center}
	{\bf Gaurab Bardhan$^1$ and Nipen Saikia$^{2, \ast}$}\\
	$^1$Department of Mathematics, Tyagbir Hem Baruah College,\\ Jamugurihat, Sonitpur, Assam, India.\\
	E. Mail: gaurabbardhan561@gmail.com
	\vskip2mm
	$^2$Department of Mathematics, Rajiv Gandhi
	University,\\ Rono Hills, Doimukh, Arunachal Pradesh, India.\\
	E. Mail(s): nipennak@yahoo.com\\
	$^\ast$\textit{Corresponding author}.\end{center}
\begin{abstract}
In this paper,  we prove some new \(q\)-series identities connecting \(4\)-regular partitions and partitions with distinct even parts with largest part being odd. We also define three new partition functions with distinct even parts except the largest part which is even,  and prove identities connecting the three partitions with \(4\)-regular partitions. Moreover, we also offer some congruence for the three newly defined partitions.
\end{abstract} 

\vskip 3mm
\noindent  {\bf Keywords and phrases:}{ partition with distinct even parts; $4$-regular partition;  partition relations;  congruences; $q$-series.}
\vskip 3mm
\noindent  {\bf 2020 Mathematical Subject Classification:}{11P81; 05A17.}

 \section{Introduction}
For any positive integer \(n\)  and any complex numbers $\alpha$ and \(q\) with \(|q|<1\), define  \(q\)-series in standard notations as
\begin{equation}\label{ie1}
   \qPoch{\alpha}{q}{0}=1,\quad \qPoch{\alpha}{q}{n}=\prod_{j=0}^{n-1}(1-\alpha q^{j}),\quad \qPoch{\alpha}{q}{\infty}=\prod_{j=0}^{\infty}(1-\alpha q^{j}). 
\end{equation}
Further for non-negative integers \(m\) and \(n\), throughout this paper we will use the following basic facts about \(q\)-series \cite{1,7}: 
\begin{equation}\label{ie2}
  \qPoch{\alpha}{q}{n+m}=\qPoch{\alpha}{q}{m}\qPoch{\alpha q^m}{q}{n},\quad \qPoch{\alpha }{q}{\infty}=\qPoch{\alpha}{q}{n}\qPoch{\alpha q^n}{q}{\infty},\quad \qPoch{\alpha}{q}{\infty}=\qPoch{\alpha}{q^2}{\infty}\qPoch{\alpha q}{q^2}{\infty} 
\end{equation}
The \(q\)-binomial theorem \cite{1} is given by
\begin{equation}\label{ie3}
  \sum_{n=0}^{\infty}\dfrac{\qPoch{\alpha}{q}{n}}{\qPoch{q}{q}{n}}z^{n}=\dfrac{\qPoch{\alpha z}{q}{\infty}}{\qPoch{z}{q}{\infty}}.
\end{equation}
Also, for any complex number $\beta$, the following formula of Andrews, Subbarao and Vidyasagar \cite{3} will be useful in this paper,
\begin{equation}\label{ie4}
\sum_{n>0} \dfrac{\qPoch{\alpha}{q}{n}}{\qPoch{\beta}{q}{n}} q^n = \dfrac{q\qPoch{\alpha}{q}{\infty}}{\beta\qPoch{\beta}{q}{\infty}(1-\dfrac{\alpha q}{\beta})}+\dfrac{1-\dfrac{q}{\beta}}{1-\dfrac{\alpha q}{\beta}}. 
\end{equation}
Recall that,  for any positive integer \(t\), a \(t\)-regular partition of a positive integer $n$ is a partition which parts are not divisible by \(t\). For example, the $4$-regular partitions of 5 are given $5, 3+2, 3+1+1, 2+2+1, 2+1+1+1$, and $1+1+1+1+1$. If \(reg_t(n)\) denotes the number of \(t\)-regular partitions of \(n\), then its generating function is given by
\begin{equation}\label{regg}
  \sum_{n=0}^{\infty}reg_t(n)q^{n}=\dfrac{\qPoch{q^t}{q^t}{\infty}}{\qPoch{q}{q}{\infty}}.  
\end{equation}
Let \(ped(n)\) denote the number of partitions of \(n\) with distinct even parts. Then the generating function of \(ped(n)\) is given by
\begin{equation}\label{pedg}
  \sum_{n=0}^{\infty}ped(n)q^{n}=\dfrac{\qPoch{-q^2}{q^2}{\infty}}{\qPoch{q}{q^2}{\infty}}.  
\end{equation}For example, \(ped(5)=6\) with the relevant partitions as $5, 4+1, 3+2, 3+1+1, 2+1+1+1$ and $1+1+1+1+1$.
In recent years,  \(ped(n)\) has been extensively studied for its arithmetic properties. One may refer \cite{2, 4,6,11,12} and references therein for details. From  \cite{2}, we note that
\begin{equation}\label{ie5}
  \sum_{n=0}^{\infty}ped(n)q^{n}=\dfrac{\qPoch{-q^2}{q^2}{\infty}}{\qPoch{q}{q^2}{\infty}}=\dfrac{\qPoch{q^4}{q^4}{\infty}}{\qPoch{q}{q}{\infty}}=\sum_{n=0}^{\infty}reg_4(n)q^{n},  
\end{equation}
which is a consequence of the following well known  identity of Lebesgue,
\begin{equation}\label{ie6}
\sum_{n=0}^{\infty} \dfrac{\qPoch{-q}{q}{n}}{\qPoch{q}{q}{n}} q^{(n+1)/2} = \qPoch{-q^2}{q^2}{\infty} \qPoch{-q}{q}{\infty} = \dfrac{\qPoch{q^4}{q^4}{\infty}}{\qPoch{q}{q}{\infty}}.
\end{equation}
 Another interesting partition function is the cubic partition function denoted by \(a(n)\) and first studied by Chan \cite{chan1}. The generating function of $a(n)$ denotes the number of cubic partitions. Then,
\begin{equation}\label{ce1}
    \sum_{n=0}^{\infty}a(n)q^{n}=\dfrac{1}{\qPoch{q}{q}{\infty}\qPoch{q^2}{q^2}{\infty}}.
\end{equation}
For congruences and other arithmetic properties of \(a(n)\) see \cite{chan3, chan1, chan2, chan4}.

In 2025, Andrews and Bachraoui \cite{DE} derived some relations connecting partition functions with distinct even parts $ped(n)$ and \(4\)-regular partition function $reg_4(n)$.  Further, they defined  three partition functions,  \(\operatorname{DE1}(n)\) that counts the number of partitions of \(n\) in which even parts are distinct and the largest part is odd;  \(\operatorname{DE2}(n)\) that counts the number of partitions of \(n\) in which even parts are distinct, the largest part is odd and appears atleast twice; and  \(\operatorname{DE3}(n)\) that counts the number of partitions of \(n\) in which even parts are distinct, the largest part is odd and appears exactly once. For example, \(\operatorname{DE1}(7)=7\) with the partitions given by \(7\), \(5+2\), \(5+1+1\), \(3+3+1\), \(3+2+1+1\), \(3+1+1+1+1\), \(1+1+1+1+1+1+1+1\); \(\operatorname{DE2}(7)=2\) with the partitions given by \(3+3+1\), \(1+1+1+1+1+1+1+1\); and \(\operatorname{DE3}(7)=5\) with the partitions given by \(
7\), \(5+2\), \(5+1+1\), \(3+2+1+1\), \(3+1+1+1+1+1.\)
The corresponding generating functions \cite{DE} are defined as,
\begin{equation}\label{ie7}
\sum_{n=0}^{\infty}\operatorname{DE1}(n)q^{n}=\sum_{n=0}^{\infty}\dfrac{\qPoch{-q^{2}}{q^{2}}{n}q^{2n+1}}{\qPoch{q}{q^{2}}{n+1}},
\end{equation}
\begin{equation}\label{ie8}
\sum_{n=0}^{\infty}\operatorname{DE2}(n)q^{n}=\sum_{n=0}^{\infty}\dfrac{\qPoch{-q^{2}}{q^{2}}{n}q^{4n+2}}{\qPoch{q}{q^{2}}{n+1}}
\end{equation} and
\begin{equation}\label{ie9}
\sum_{n=0}^{\infty}\operatorname{DE3}(n)q^{n}=\sum_{n=0}^{\infty}\dfrac{\qPoch{-q^{2}}{q^{2}}{n}q^{2n+1}}{\qPoch{q}{q^{2}}{n}}.
\end{equation}
Andrews and Bachraoui \cite[Corollary 1, 2, 3 and 4]{DE} further derived following partition relations, respectively:\\
(i) For \(n>0\), \(\operatorname{DE1}(n)+\operatorname{DE1}(n-1)\) equals the number of the $4$-regular partitions of \(n\).\\
(ii) For \(n>0\), \(\operatorname{DE2}(n)+\operatorname{DE2}(n-3)\) equals the number of \(4\)-regular partitions of \(n\) into parts each \(>1\).\\
(iii) For \(n>1\), \(\operatorname{DE3}(n+2)+\operatorname{DE3}(n-1)\) equals the number of \(4\)-regular partitions of \(n\).\\
(iv) For \(n>1\), \(\operatorname{DE3}(n+2)+\operatorname{DE3}(n-1)=\operatorname{DE1}(n)+\operatorname{DE1}(n-1)\).\\

Motivated by the above work, in this paper we first generalise the partition functions \(\operatorname{DE2}(n)\)  and \(\operatorname{DE3}(n)\)  and prove new relations connecting them with the $4$-regular partition function. Then we  define three new partition functions with distinct even parts except the largest part which is even, and establish relations for the partition functions  by defining their generating functions. Some  congruence properties are also derived in the process. 

In Section 2, we first generalise the partition functions \(\operatorname{DE2}(n)\)  and \(\operatorname{DE3}(n)\)  and prove new relations connecting them with the $4$-regular partition function.  We also define  three new partition functions and  main results are presented. Proofs are given in Section 3. In Section 4, we give  concluding remarks.

\section{Definitions and Main Results}
We define the generalisation of the partition functions \(\operatorname{DE2}(n)\)  and \(\operatorname{DE3}(n)\) in Definitions \ref{d1} and \ref{d2}, respectively.
\begin{definition}\label{d1}
   Let \(DE_{\geq k}(n)\) counts the number of partitions of \(n\) in which even parts are distinct, the largest part is odd and appears atleast \(k\) times. Then 
\begin{equation}\label{s2e1}
\sum_{n=0}^{\infty}{DE_{\geq k}}(n)q^{n}=\sum_{n=0}^{\infty}\dfrac{\qPoch{-q^{2}}{q^{2}}{n}q^{(2n+1)k}}{\qPoch{q}{q^{2}}{n+1}}. 
\end{equation}
\end{definition}
For example, \(DE_{\geq3}(10)=2,\) counting \(3+3+1,\) \(1+1+1+1+1+1+1+1+1+1.\) 
Also note that \(DE_{\geq2}(n)=\mathrm{DE2}(n).\) 

\begin{definition}\label{d2}
   Let \(DE_{k}(n)\) counts the number of partitions of \(n\) in which even parts are distinct, the largest part is odd and appears exactly \(k\) times. Then 
\begin{equation}\label{s2e2}
\sum_{n=0}^{\infty}DE_k(n)q^{n}=\sum_{n=0}^{\infty}\dfrac{\qPoch{-q^{2}}{q^{2}}{n}q^{(2n+1)k}}{\qPoch{q}{q^{2}}{n}}.
\end{equation}
\end{definition}
For example, \(DE_{2}(10)=3,\) counting \(3+3+2+1+1,\) \(3+3+1+1+1+1,\) \(5+5.\)
Also note that \(DE_{1}(n)=\mathrm{DE3}(n).\)

Next, we define three new partition functions that count the number of partitions of \(n\) such that all even parts are distinct except the largest part which is even. The corresponding generating functions are also defined.
\begin{definition}\label{d3}
	Let $DE_e(n)$ denote the number of partitions of $n$ in which no even part is repeated except the largest part which is even and unrestricted. Then
	\begin{equation}\label{d3e}
		\sum_{n \geq 0} DE_e(n) q^n = \sum_{n \geq 0} \dfrac{\qPoch{-q^2}{q^2}{n}q^{2n}}{\qPoch{q}{q^2}{n}}.
	\end{equation}
\end{definition}
For example, \(DE_e(6)=6,\) counting \(6,\) \(4+2,\) \(4+1+1,\) \(2+2+2,\) \(2+2+1+1,\) \(2+1+1+1+1.\)

\begin{definition}\label{d4}
	Let $DE_{ek}(n)$ denote the number of partitions of $n$ in which no even part is repeated and the largest part is even that appears exactly \(k\) times. Then
	\begin{equation}\label{d4e}
		\sum_{n \geq 0} DE_{ek}(n) q^n = \sum_{n \geq 0} \dfrac{\qPoch{-q^2}{q^2}{n}q^{(2n+2)k}}{\qPoch{q}{q^2}{n+1}}.
	\end{equation}   
\end{definition}
For example, \(DE_{e1}(6)=4,\) counting \(6,\) \(4+2,\) \(4+1+1,\) \(2+1+1+1+1.\)
\begin{definition}\label{d5}
	Let $DE_{e\geq k}(n)$ denote the number of partitions of $n$ in which no even part is repeated except the largest part which is even and appears atleast
	\(k\) times. Then, 
	\begin{equation}\label{d5e}
		\sum_{n \geq 0} DE_{e\geq k}(n) q^n = \sum_{n \geq 0} \dfrac{\qPoch{-q^2}{q^2}{n}q^{2nk}}{\qPoch{q}{q^2}{n}}.
	\end{equation} 
\end{definition}
For example, \(DE_{e\geq2}(6)=2,\) counting \(2+2+2,\) \(2+2+1+1.\) 

With the help of \eqref{s2e1} and \eqref{s2e2},  we derive following \(q\)-series identities for \(DE_{\geq k}(n)\) and \(DE_{k}(n)\) in Theorems 1 and 2, respectively:
\begin{theorem}\label{t1} We have
\begin{equation}\label{t1e}
   \sum_{m \geq 0}DE_{\geq k}(m) q^m=\dfrac{\qPoch{q^4}{q^4}{\infty}(q;q)_{k-1} }{\qPoch{q}{q}{\infty}} \sum_{n \geq 0}\dfrac{(q^{k};q^2)_n (q^{k+1};q^2)_nq^{2n}}{\qPoch{q^4}{q^4}{n}} - \dfrac{\qPoch{q^4}{q^4}{\infty}}{(1+q)\qPoch{q}{q}{\infty}}\cdot \dfrac{\qPoch{q^2}{q^2}{k-1}}{\qPoch{-q^3}{q^2}{k-1}}. 
\end{equation}
    
\end{theorem}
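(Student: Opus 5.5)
The plan is to pull out the infinite product $\qPoch{q^4}{q^4}{\infty}/\qPoch{q}{q}{\infty}$, interchange summations so the sum over $n$ becomes geometric, and then recognise the remaining single series with a transformation. Using \eqref{ie2} in base $q^2$, I write
\[
\frac{\qPoch{-q^2}{q^2}{n}}{\qPoch{q}{q^2}{n+1}}=\frac{\qPoch{-q^2}{q^2}{\infty}}{\qPoch{q}{q^2}{\infty}}\cdot\frac{\qPoch{q^{2n+3}}{q^2}{\infty}}{\qPoch{-q^{2n+2}}{q^2}{\infty}}
=\frac{\qPoch{q^4}{q^4}{\infty}}{\qPoch{q}{q}{\infty}}\cdot\frac{\qPoch{q^{2n+3}}{q^2}{\infty}}{\qPoch{-q^{2n+2}}{q^2}{\infty}},
\]
where the second equality is \eqref{ie5}. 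This already produces the common prefactor of both terms in \eqref{t1e}.

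Next I expand the remaining ratio by the $q$-binomial theorem \eqref{ie3} in base $q^2$, taking $z=-q^{2n+2}$ and $\alpha=-q$, so that $\alpha z=q^{2n+3}$:
\[
\frac{\qPoch{q^{2n+3}}{q^2}{\infty}}{\qPoch{-q^{2n+2}}{q^2}{\infty}}=\sum_{m\ge0}\frac{\qPoch{-q}{q^2}{m}}{\qPoch{q^2}{q^2}{m}}(-1)^m q^{(2n+2)m}.
\]
I insert this into \eqref{s2e1} and interchange the sums; for $|q|<1$ the double series converges absolutely, so this is allowed. The sum over $n$ is then geometric:
\[
\sum_{n\ge0}q^{(2n+1)k+(2n+2)m}=\frac{q^{k+2m}}{1-q^{2k+2m}}.
\]
Hence the left side of \eqref{t1e} equals $\dfrac{\qPoch{q^4}{q^4}{\infty}}{\qPoch{q}{q}{\infty}}\,S_k$, where
\[
S_k=\sum_{m\ge0}\frac{\qPoch{-q}{q^2}{m}(-1)^mq^{k+2m}}{\qPoch{q^2}{q^2}{m}(1-q^{2k+2m})}.
\]
Writing $\frac{1}{1-q^{2k+2m}}=\frac{1}{1-q^{2k}}\cdot\frac{\qPoch{q^{2k}}{q^2}{m}}{\qPoch{q^{2k+2}}{q^2}{m}}$ turns $S_k$ into a ${}_3\phi_2$-type series in base $q^2$ with argument $-q^2$.

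The main obstacle is transforming $S_k$ into
\[
\qPoch{q}{q}{k-1}\sum_{n\ge0}\frac{\qPoch{q^k}{q}{2n}q^{2n}}{\qPoch{q^4}{q^4}{n}}-\frac{\qPoch{q^2}{q^2}{k-1}}{(1+q)\qPoch{-q^3}{q^2}{k-1}},
\]
where I have used $\qPoch{q^k}{q^2}{n}\qPoch{q^{k+1}}{q^2}{n}=\qPoch{q^k}{q}{2n}$. The correction term should come out of the Andrews--Subbarao--Vidyasagar formula \eqref{ie4}: its first summand carries an infinite product, and its second summand is a rational term of exactly the shape $\frac{\qPoch{q^2}{q^2}{k-1}}{(1+q)\qPoch{-q^3}{q^2}{k-1}}$.

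To reach \eqref{ie4}, I would first attempt a Heine transformation in base $q^2$ applied directly to the original sum. This sum is the ${}_2\phi_1$ series
\[
\frac{q^k}{1-q}\,{}_2\phi_1\!\left(-q^2,\,q^2;\,q^3;\,q^2,\,q^{2k}\right).
\]
Among the three Heine forms, I would pick the one whose new argument is $q^2$ and whose denominator becomes $\qPoch{q^2}{q^2}{n}\qPoch{-q^2}{q^2}{n}=\qPoch{q^4}{q^4}{n}$. Any leftover finite products such as $\qPoch{q^{2k}}{q^2}{\infty}/\qPoch{q^2}{q^2}{\infty}$ should then collapse, via \eqref{ie2}, to $\qPoch{q}{q}{k-1}$-type factors.

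If no Heine form lands exactly on the target, I would fall back on induction on $k$. The base case $k=1$ should follow from the case $k=2$, which is \cite[Corollary 2]{DE}, together with \eqref{ie7} and \eqref{ie5}. For the inductive step, Definitions~\ref{d1} and~\ref{d2} give $DE_{\ge k}(m)-DE_{\ge k+1}(m)=DE_k(m)$, which relates consecutive values of $k$. Evaluating the $DE_k$ generating function \eqref{s2e2} with \eqref{ie4}, taking $\alpha=-q^2$ and $\beta$ a power of $q$ after the same product extraction, supplies the recursion for both the series part and the rational correction term. Checking that the factors $\qPoch{q}{q}{k-1}$ and $\qPoch{q^2}{q^2}{k-1}/\qPoch{-q^3}{q^2}{k-1}$ each pick up exactly one new factor per step is the routine but delicate bookkeeping at this stage.
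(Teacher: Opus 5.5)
Your left-side reduction is correct: the product extraction, the $q$-binomial expansion with $\alpha=-q$ and $z=-q^{2n+2}$ in base $q^2$, and the geometric sum all check out. But there is a genuine gap. The reduction is itself just the Heine transformation taking $\frac{q^k}{1-q}\,{}_2\phi_1(q^2,-q^2;q^3;q^2,q^{2k})$ to $\frac{q^k}{1-q^{2k}}\,{}_2\phi_1(-q,q^{2k};q^{2k+2};q^2,-q^2)$. The step you call the main obstacle is the whole content of the theorem, and neither proposed route gets through it.

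\textbf{The Heine route.} A Heine transformation is a two-term identity (series $=$ product $\times$ series), whereas \eqref{t1e} carries an extra independent term. Concretely, the only Heine image with argument $q^2$ is a product times ${}_2\phi_1(q,q^{2k};-q^{2k+2};q^2,q^2)$, whose denominator is $(q^2;q^2)_n(-q^{2k+2};q^2)_n$, not $(q^4;q^4)_n$. Likewise, the rational summand of \eqref{ie4} is a quotient of two binomials. It is not of the shape $\frac{(q^2;q^2)_{k-1}}{(1+q)(-q^3;q^2)_{k-1}}$ once $k\ge 2$.

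\textbf{The induction route.} The bookkeeping does work: the right side of \eqref{t1e} at $k$ minus its value at $k+1$ is exactly the right side of \eqref{t2e}. But this only trades Theorem~\ref{t1} for Theorem~\ref{t2}, an identity of the same type. Moreover, \eqref{ie4} cannot supply Theorem~\ref{t2} for $k\ge2$, because it only evaluates series whose argument equals the base, while $\sum_n\frac{(-q^2;q^2)_n}{(q;q^2)_n}q^{2kn}$ has argument $q^{2k}$.

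Your base case is also misrouted. The case $k=1$ follows from \eqref{ie4} in base $q^2$ with $\alpha=q$, $\beta=-q^2$, summed from $n=0$, which gives $\sum_{n\ge0}\frac{(q;q)_{2n}q^{2n}}{(q^4;q^4)_n}=\frac{1}{1+q}\bigl(2-\frac{(q;q)_\infty}{(q^4;q^4)_\infty}\bigr)$, combined with \eqref{an2}. The Andrews--Bachraoui relation for $\mathrm{DE2}$ does not evaluate the series appearing in \eqref{t1e}.

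\textbf{The missing idea.} Start from the right-hand series and dissect by parity. Write $(q;q)_{2n+k-1}=(q;q)_\infty/(q^{2n+k};q)_\infty$, expand $1/(q^{2n+k};q)_\infty$ by Euler's identity, interchange the sums, and sum over $n$ by Euler again. This gives
\[
\sum_{n\ge0}\frac{(q;q)_{2n+k-1}q^{2n}}{(q^4;q^4)_n}=(q;q)_\infty\sum_{m\ge0}\frac{q^{km}}{(q;q)_m\,(q^{2m+2};q^4)_\infty}.
\]
For odd $m=2j+1$, the identity $(q^4;q^4)_j/(q;q)_{2j+1}=(-q^2;q^2)_j/(q;q^2)_{j+1}$ turns these terms into exactly $\frac{(q;q)_\infty}{(q^4;q^4)_\infty}\sum_m DE_{\ge k}(m)q^m$.

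For even $m=2j$, one has $(q^2;q^4)_j/(q;q)_{2j}=(-q;q^2)_j/(q^2;q^2)_j$. The $q$-binomial theorem then sums these terms to $\frac{(q;q)_\infty}{(q^2;q^4)_\infty}\cdot\frac{(-q^{2k+1};q^2)_\infty}{(q^{2k};q^2)_\infty}=\frac{(q^2;q^2)_{k-1}}{(1+q)(-q^3;q^2)_{k-1}}$.

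It is this even/odd split that produces the third term, which no single two-term transformation of the left side generates.
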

\begin{theorem}\label{t2} We have
\begin{equation}\label{t2e}
 \sum_{m\geq 0}DE_k(m)q^m
= \dfrac{q^k\qPoch{q^4}{q^4}{\infty}}{\qPoch{q}{q}{\infty}} \sum_{n\geq 0} \dfrac{\qPoch{q}{q}{2n+k-1} q^{4n}}{\qPoch{q^4}{q^4}{n}}-\dfrac{\qPoch{q^4}{q^4}{\infty}}{\qPoch{q}{q}{\infty}} \cdot \dfrac{q^{2k}\qPoch{q}{q}{\infty}}{(1-q)\qPoch{q^6}{q^4}{\infty}}\cdot \dfrac{\qPoch{-q^{3+2k}}{q^2}{\infty}}{\qPoch{q^{2k}}{q^2}{\infty}}.   
\end{equation}
\end{theorem}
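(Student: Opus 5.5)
The plan is a direct manipulation of \eqref{s2e2}: pull out infinite products so that \eqref{ie5} supplies the factor $\qPoch{q^4}{q^4}{\infty}/\qPoch{q}{q}{\infty}$, expand what remains by Euler's formulas (special cases of \eqref{ie3}), and interchange summations.

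\textbf{Step 1 (normalising each term).} Write $\qPoch{-q^2}{q^2}{n}=\qPoch{-q^2}{q^2}{\infty}/\qPoch{-q^{2n+2}}{q^2}{\infty}$ and $1/\qPoch{q}{q^2}{n}=\qPoch{q^{2n+1}}{q^2}{\infty}/\qPoch{q}{q^2}{\infty}$. By \eqref{ie5} the prefactor $\qPoch{-q^2}{q^2}{\infty}/\qPoch{q}{q^2}{\infty}$ equals $\qPoch{q^4}{q^4}{\infty}/\qPoch{q}{q}{\infty}$. Multiplying numerator and denominator of the remaining ratio by $\qPoch{q^{2n+2}}{q^2}{\infty}$ and using the last identity in \eqref{ie2} gives
\[
\frac{\qPoch{q^{2n+1}}{q^2}{\infty}}{\qPoch{-q^{2n+2}}{q^2}{\infty}}=\frac{\qPoch{q^{2n+1}}{q}{\infty}}{\qPoch{q^{4n+4}}{q^4}{\infty}}=\frac{\qPoch{q}{q}{\infty}}{\qPoch{q}{q}{2n}\,\qPoch{q^{4n+4}}{q^4}{\infty}} .
\]
Hence
\[
\sum_{m\ge0}DE_k(m)q^m=\frac{\qPoch{q^4}{q^4}{\infty}}{\qPoch{q}{q}{\infty}}\,\qPoch{q}{q}{\infty}\sum_{n\ge0}\frac{q^{(2n+1)k}}{\qPoch{q}{q}{2n}\,\qPoch{q^{4n+4}}{q^4}{\infty}} .
\]

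\textbf{Step 2 (double series and swap).} Expand $1/\qPoch{q^{4n+4}}{q^4}{\infty}=\sum_{j\ge0}q^{(4n+4)j}/\qPoch{q^4}{q^4}{j}$ by \eqref{ie3}, and interchange the $n$- and $j$-sums. The inner sum is then the even part of Euler's series $\sum_m x^m/\qPoch{q}{q}{m}=1/\qPoch{x}{q}{\infty}$ with $x=q^{k+2j}$:
\[
\sum_{n\ge0}\frac{x^{2n}}{\qPoch{q}{q}{2n}}=\frac12\left(\frac{1}{\qPoch{x}{q}{\infty}}+\frac{1}{\qPoch{-x}{q}{\infty}}\right).
\]
In the first piece, $\qPoch{q}{q}{\infty}/\qPoch{q^{k+2j}}{q}{\infty}=\qPoch{q}{q}{2j+k-1}$, together with the factor $q^{k}q^{4j}/\qPoch{q^4}{q^4}{j}$. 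This reproduces, after renaming $j\to n$, exactly the series $q^k\sum_n \qPoch{q}{q}{2n+k-1}q^{4n}/\qPoch{q^4}{q^4}{n}$ in \eqref{t2e}.

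\textbf{Step 3 (the residual sum, main obstacle).} The leftover is
\[
\qPoch{q}{q}{\infty}\sum_{j\ge0}\frac{q^{k+4j}}{\qPoch{q^4}{q^4}{j}\,\qPoch{-q^{k+2j}}{q}{\infty}} .
\]
I must show that this sum, combined with the first piece and the factor $\tfrac12$, yields the closed product term in \eqref{t2e}; in particular the $\tfrac12$ must disappear. I expect this to be the main obstacle.

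My first attempt will be to split $\qPoch{-q^{k+2j}}{q}{\infty}$ into its base-$q^2$ halves, $\qPoch{-q^{k+2j}}{q^2}{\infty}\,\qPoch{-q^{k+2j+1}}{q^2}{\infty}$. I then rewrite the sum as a ${}_2\phi_1$-type series in base $q^2$ with argument $q^{2}$ (after pairing $\qPoch{q^4}{q^4}{j}=\qPoch{q^2}{q^2}{j}\qPoch{-q^2}{q^2}{j}$), and sum it with the Andrews--Subbarao--Vidyasagar formula \eqref{ie4}. That formula's two-term output has exactly the shape of the answer: a product term plus a rational correction. The product part should produce $\dfrac{q^{2k}\qPoch{q}{q}{\infty}\qPoch{-q^{3+2k}}{q^2}{\infty}}{(1-q)\qPoch{q^6}{q^4}{\infty}\qPoch{q^{2k}}{q^2}{\infty}}$. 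The rational correction should be what cancels the $\tfrac12$ against the first piece.

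If the halving bookkeeping fails, I would instead avoid the even-part splitting altogether. The fallback starts from the telescoping $1/\qPoch{q}{q^2}{n}=(1-q^{2n+1})/\qPoch{q}{q^2}{n+1}$, which shows
\[
\sum_{m\ge0} DE_k(m)q^m=\sum_{m\ge0} DE_{\geq k}(m)q^m-\sum_{m\ge0} DE_{\geq k+1}(m)q^m .
\]
I would then redo, for this difference, the same \eqref{ie3}/\eqref{ie4} manipulation used for Theorem \ref{t1}, checking the resulting constant terms at the end against the first few coefficients of $DE_k(n)$ (e.g.\ $DE_2(10)=3$).
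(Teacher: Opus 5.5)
Your Steps 1 and 2 are correct; they are part of the paper's computation run in reverse. Step 3, however, has a genuine gap.

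Write $S_1=q^k\sum_{n\ge0}\qPoch{q}{q}{2n+k-1}q^{4n}/\qPoch{q^4}{q^4}{n}$. Let $S_2$ be your residual sum, and let $P=\frac{q^{2k}\qPoch{q}{q}{\infty}}{(1-q)\qPoch{q^6}{q^4}{\infty}}\cdot\frac{\qPoch{-q^{3+2k}}{q^2}{\infty}}{\qPoch{q^{2k}}{q^2}{\infty}}$. Your decomposition gives $\sum_m DE_k(m)q^m=\frac{\qPoch{q^4}{q^4}{\infty}}{\qPoch{q}{q}{\infty}}\cdot\tfrac12(S_1+S_2)$, so what must be shown is $S_2=S_1-2P$. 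In other words, the residual must contain a full copy of the unsummed series $S_1$. Since $S_1$ is not rational, no rational correction can turn $\tfrac12S_1$ into $S_1$. Evaluating $S_2$ by itself therefore cannot close the argument unless you also find a closed form for $S_1$, which the theorem does not assert.

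Moreover, \eqref{ie4} does not apply to $S_2$. Normalised, it reads
\[
S_2=\frac{q^k\qPoch{q}{q}{\infty}}{\qPoch{-q^k}{q}{\infty}}\sum_{j\ge0}\frac{\qPoch{-q^k}{q^2}{j}\qPoch{-q^{k+1}}{q^2}{j}}{\qPoch{q^2}{q^2}{j}\qPoch{-q^2}{q^2}{j}}\,q^{4j}.
\]
This is a ${}_2\phi_1$ in base $q^2$ with argument $q^4$ (not $q^2$), and there is no numerator $\qPoch{q^2}{q^2}{j}$ to cancel the denominator. So it is not of the form $\sum_n\qPoch{\alpha}{q}{n}q^n/\qPoch{\beta}{q}{n}$.

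The missing idea is not to symmetrise the even part. Instead write
\[
\sum_n \frac{x^{2n}}{\qPoch{q}{q}{2n}}=\frac{1}{\qPoch{x}{q}{\infty}}-\sum_n \frac{x^{2n+1}}{\qPoch{q}{q}{2n+1}}.
\]
The first term gives $S_1$ with coefficient $1$. The odd part, after swapping back and summing over $j$ first by Euler, is $\qPoch{q}{q}{\infty}\sum_n q^{(2n+2)k}/\bigl(\qPoch{q}{q}{2n+1}\qPoch{q^{4n+6}}{q^4}{\infty}\bigr)$. Using $\qPoch{q^6}{q^4}{n}/\qPoch{q}{q}{2n+1}=\qPoch{-q^3}{q^2}{n}/\bigl((1-q)\qPoch{q^2}{q^2}{n}\bigr)$ and \eqref{ie3} in base $q^2$ with $z=q^{2k}$, this is exactly $P$. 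That is precisely the paper's argument; equivalently, $S_1-S_2$ is twice this odd part.

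Your fallback, by contrast, is sound and gives a genuinely different, shorter proof. You need no new \eqref{ie3}/\eqref{ie4} work, only Theorem \ref{t1} at $k$ and at $k+1$. Write the series of Theorem \ref{t1} as $\sum_n\qPoch{q}{q}{2n+k-1}q^{2n}/\qPoch{q^4}{q^4}{n}$ and use $\qPoch{q}{q}{2n+k}=\qPoch{q}{q}{2n+k-1}(1-q^{2n+k})$. Then the difference of the two series is exactly $S_1$. Apart from the common factor $\qPoch{q^4}{q^4}{\infty}/\qPoch{q}{q}{\infty}$, the difference of the two correction terms is
\[
\frac{\qPoch{q^2}{q^2}{k-1}}{(1+q)\qPoch{-q^3}{q^2}{k-1}}\left(1-\frac{1-q^{2k}}{1+q^{2k+1}}\right)=\frac{q^{2k}\qPoch{q^2}{q^2}{k-1}}{\qPoch{-q^3}{q^2}{k}}.
\]
This equals $P$, because $\qPoch{q}{q}{\infty}/\bigl((1-q)\qPoch{q^6}{q^4}{\infty}\bigr)=\qPoch{q^2}{q^2}{\infty}/\qPoch{-q^3}{q^2}{\infty}$.

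This route derives Theorem \ref{t2} from Theorem \ref{t1} via $DE_k=DE_{\geq k}-DE_{\geq k+1}$, and it shows that the ``product'' term is really a rational function of $q$. The paper's proof is instead self-contained: it repeats the Euler-expansion and parity-splitting argument of Theorem \ref{t1} with $q^{4n}$ in place of $q^{2n}$. As written, though, you leave the telescoping route as an unexecuted contingency (to be checked numerically) while pursuing a main line that cannot close. You should make the telescoping argument the proof.
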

Using Theorems \ref{t1}  and \ref{t2}, we derive relations of \(DE_{\geq 3}(n)\) and \(DE_{2}(n)\) with the \(4\)-regular partition function, which are presented in Theorem \ref{t3} and \ref{t4}, respectively below.
\begin{theorem}\label{t4}
    For \(n\geq12\), we have
$$
    DE_2(n-3)-DE_2(n-4)+DE_2(n-6)-DE_2(n-7)+DE_2(n-8)-DE_2(n-9)$$$$+DE_2(n-11)-
    DE_2(n-12)=reg_4(n-2)+reg_4(n-3)-2reg_4(n-4)-reg_4(n-5)$$\begin{equation}\label{t4e}+reg_4(n-6)+2reg_4(n-8)-reg_4(n-9)-2reg_4(n-10)+reg_4(n-11).
\end{equation}
\end{theorem}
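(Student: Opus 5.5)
The plan is to specialize Theorem~\ref{t2} to $k=2$ and rewrite both terms on its right-hand side as $\frac{\qPoch{q^4}{q^4}{\infty}}{\qPoch{q}{q}{\infty}}=\sum reg_4(n)q^n$ times a rational function of $q$. Clearing denominators should then give an identity of the form
\begin{equation*}
q^{3}(1-q)(1+q^3)(1+q^5)\sum_{m\ge0}DE_2(m)q^m=P(q)\sum_{n\ge0}reg_4(n)q^n+R(q),
\end{equation*}
with $P$ and $R$ polynomials. The left side has exactly the shape of \eqref{t4e}, since
\begin{equation*}
q^3-q^4+q^6-q^7+q^8-q^9+q^{11}-q^{12}=q^3(1-q)(1+q^3)(1+q^5).
\end{equation*}
I expect $P(q)=q^2+q^3-2q^4-q^5+q^6+2q^8-q^9-2q^{10}+q^{11}$ and $\deg R\le 11$. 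Comparing coefficients of $q^n$ for $n\ge12$ then gives \eqref{t4e}, and $\deg R\le 11$ is exactly why the statement needs $n\ge12$.

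\textbf{Second term.} With $k=2$ it equals $\frac{\qPoch{q^4}{q^4}{\infty}}{\qPoch{q}{q}{\infty}}$ times
\begin{equation*}
\frac{q^4\qPoch{q}{q}{\infty}\qPoch{-q^7}{q^2}{\infty}}{(1-q)\qPoch{q^6}{q^4}{\infty}\qPoch{q^4}{q^2}{\infty}}.
\end{equation*}
Using \eqref{ie2}, I rewrite the pieces as follows:
\begin{itemize}
\item $\qPoch{q}{q}{\infty}/\qPoch{q^4}{q^2}{\infty}=(1-q^2)\qPoch{q}{q^2}{\infty}$;
\item $\qPoch{q}{q^2}{\infty}/(1-q)=\qPoch{q^3}{q^2}{\infty}$;
\item $\qPoch{-q^7}{q^2}{\infty}=\qPoch{-q^3}{q^2}{\infty}/\big((1+q^3)(1+q^5)\big)$;
\item $\qPoch{q^3}{q^2}{\infty}\qPoch{-q^3}{q^2}{\infty}=\qPoch{q^6}{q^4}{\infty}$.
\end{itemize}
After these substitutions the second term collapses to
\begin{equation*}
\frac{\qPoch{q^4}{q^4}{\infty}}{\qPoch{q}{q}{\infty}}\cdot\frac{q^4(1-q^2)}{(1+q^3)(1+q^5)},
\end{equation*}
which accounts for the factor $(1+q^3)(1+q^5)$ on the left.

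\textbf{First term.} Write $\qPoch{q}{q}{2n+1}=(1-q)\qPoch{q^3}{q^2}{n}\qPoch{q^2}{q^2}{n}$ and $\qPoch{q^4}{q^4}{n}=\qPoch{q^2}{q^2}{n}\qPoch{-q^2}{q^2}{n}$. The sum then becomes $(1-q)\,S$, where
\begin{equation*}
S=\sum_{n\ge0}t_n q^{4n},\qquad t_n=\frac{\qPoch{q^3}{q^2}{n}}{\qPoch{-q^2}{q^2}{n}}.
\end{equation*}
The tool available is \eqref{ie4} in base $q^2$ with $\alpha=q^3$ and $\beta=-q^2$. It evaluates $\sum t_nq^{2n}$ as a ratio of infinite products plus an elementary rational function, but it weights by $q^{2n}$, not $q^{4n}$. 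To get down to that weight, I use the contiguous relation
\begin{equation*}
t_{n+1}(1+q^{2n+2})=t_n(1-q^{2n+3}),
\end{equation*}
together with the identity $(1-q^{2n+3})+q(1+q^{2n+2})=1+q$. Multiplying these by suitable powers of $q^{2n}$ and summing over $n$ lets me express $\sum t_nq^{4n}$ linearly through $\sum t_nq^{2n}$, boundary terms $t_0,t_1$, and rational factors in $q$.

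Once $\sum t_nq^{2n}$ is evaluated by \eqref{ie4}, the product part should again be a rational multiple of $\frac{\qPoch{q^4}{q^4}{\infty}}{\qPoch{q}{q}{\infty}}$: the products $\qPoch{q^3}{q^2}{\infty}/\qPoch{-q^2}{q^2}{\infty}$ combine with the prefactor in this way. The purely rational part of \eqref{ie4} multiplies $\frac{\qPoch{q^4}{q^4}{\infty}}{\qPoch{q}{q}{\infty}}$ too, so it also contributes to $P$, not to $R$.

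\textbf{Main obstacle.} The step I expect to be hardest is this reduction of the $q^{4n}$-weighted sum. The difficulty is choosing the combination of shifted contiguous relations so that no divergent unweighted sum $\sum t_n$ appears, and tracking the boundary terms carefully. If this route gets messy, my fallback is Heine's transformation applied to ${}_2\phi_1(q^3,q^2;-q^2;q^2,q^4)$.

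\textbf{Final assembly.} I add the two pieces and multiply through by $(1+q^3)(1+q^5)(1-q)q^{\,\cdot}$ to clear the denominators. I then expand the resulting polynomial coefficient of $\frac{\qPoch{q^4}{q^4}{\infty}}{\qPoch{q}{q}{\infty}}$ and check it equals the stated $P(q)$. Finally I check that every leftover term not multiplying $\frac{\qPoch{q^4}{q^4}{\infty}}{\qPoch{q}{q}{\infty}}$ is a polynomial of degree at most $11$. Extracting the coefficient of $q^n$ for $n\ge 12$ then yields \eqref{t4e}.
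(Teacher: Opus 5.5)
\textbf{The statement is false, so your plan fails at its last step.} Your route is the paper's route: Theorem~\ref{t2} at $k=2$, collapse the second term, reduce the $q^{4n}$-weighted sum to one that \eqref{ie4} evaluates, then clear denominators. Your simplification of the second term and your contiguous relation are both correct. What cannot succeed is the final step, ``check it equals the stated $P(q)$'', because \eqref{t4e} does not hold.

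At $n=12$ the left side is $DE_2(9)-DE_2(8)+DE_2(6)=2-2+1=1$, since the other five terms vanish. Using $reg_4(1),\dots,reg_4(10)=1,2,3,4,6,9,12,16,22,29$, the right side is $29+22-32-12+9+8-3-4+1=18$.

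Carried out correctly, your own method gives a different identity.
\begin{itemize}
\item Multiply $t_{n+1}(1+q^{2n+2})=t_n(1-q^{2n+3})$ by $q^{2n}$ and sum. This gives $(1+q^5)A_2=2-(1-q^2)A_1$, where $A_j=\sum_{n\ge0}t_nq^{2jn}$, and no divergent sum arises.
\item Then \eqref{ie4} in base $q^2$ gives $(1+q^3)A_1=2-t_\infty$, with $t_\infty=\frac{\qPoch{q^3}{q^2}{\infty}}{\qPoch{-q^2}{q^2}{\infty}}=\frac{\qPoch{q}{q}{\infty}}{(1-q)\qPoch{q^4}{q^4}{\infty}}$.
\item Substituting into Theorem~\ref{t2} yields
\[
(1+q^3)(1+q^5)\sum_{n\ge0}DE_2(n)q^n=(1-q^2)\Bigl(q^2+q^4\sum_{n\ge0}reg_4(n)q^n\Bigr).
\]
\end{itemize}
In your normalization this means $P(q)=q^7(1-q)(1-q^2)$ and $R(q)=q^5(1-q)(1-q^2)$. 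Equivalently, $DE_2(n)+DE_2(n-3)+DE_2(n-5)+DE_2(n-8)=reg_4(n-4)-reg_4(n-6)$ for $n\ge5$, which checks numerically.

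Your bookkeeping remark is also reversed. $t_\infty$ is a rational multiple of $\qPoch{q}{q}{\infty}/\qPoch{q^4}{q^4}{\infty}$. After multiplication by the prefactor $q^2\qPoch{q^4}{q^4}{\infty}/\qPoch{q}{q}{\infty}$, the product part of \eqref{ie4} becomes purely rational and goes into $R$. Only the rational part of \eqref{ie4} goes into $P$.

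\textbf{Where the paper goes wrong.} The paper reaches the stated polynomial only through slips in \eqref{pt3e1}. Write $X$ for the $q^{4n}$-sum there. Its first line expresses $X$ as $q^{-2}$ times a difference of two sums, and that difference equals $q^2X$ identically, because $\qPoch{q}{q}{2n+1}-\qPoch{q}{q}{2n+2}=q^{2n+2}\qPoch{q}{q}{2n+1}$. So that step carries no information. It becomes a nontrivial equation only through three errors:
\begin{itemize}
\item the factor $(1-q)$ of $\qPoch{q}{q}{2n+1}$ is dropped;
\item $\qPoch{q^4}{q^2}{n}/\qPoch{q^2}{q^2}{n}$ is treated as $1-q^{2n+2}$ rather than $(1-q^{2n+2})/(1-q^2)$;
\item \eqref{ie4} is evaluated as $\frac{1+q-t_\infty}{1+q^3}$ instead of $\frac{2-t_\infty}{1+q^3}$. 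The former has constant term $0$, whereas $\sum t_nq^{2n}=1+O(q^2)$.
\end{itemize}
The closed form the paper obtains for $X$ even has a double pole at $q=0$, although $X=1-q+O(q^4)$. No additional idea can rescue the proposal; the statement itself has to be replaced by the corrected identity above.
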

\begin{theorem}\label{t3}
For \(n\geq10\), we have
$$
 DE_{\geq3}(n-1)-DE_{\geq3}(n-2)+DE_{\geq3}(n-4)-DE_{\geq3}(n-5)+DE_{\geq3}(n-6)-DE_{\geq3}(n-7)$$\begin{equation}\label{t3e}+
 DE_{\geq3}(n-9)-DE_{\geq3}(n-10)=-reg_4(n)-3reg_4(n-1)+3reg_4(n-2)-reg_4(n-5)+reg_4(n-6).
 \end{equation}
 \end{theorem}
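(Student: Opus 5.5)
The plan is to specialise Theorem \ref{t1} to $k=3$, evaluate the remaining infinite series in closed form, and compare coefficients. The left-hand side of \eqref{t3e} is the coefficient of $q^n$ in $P(q)\sum_m DE_{\geq3}(m)q^m$, where
\[
P(q)=q-q^2+q^4-q^5+q^6-q^7+q^9-q^{10}=q(1-q)(1+q^3)(1+q^5).
\]
With $k=3$ the correction term in \eqref{t1e} is
\[
\frac{(q^4;q^4)_\infty}{(1+q)(q;q)_\infty}\cdot\frac{(1-q^2)(1-q^4)}{(1+q^3)(1+q^5)}.
\]
Its denominator $(1+q^3)(1+q^5)$ is exactly what $P(q)$ clears, so the choice of multiplier is explained by this term.

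\textbf{Step 1: rewrite the series.} Put $k=3$ in \eqref{t1e}, so that $(q;q)_{2}=(1-q)(1-q^2)$. In the series we have $(q^3;q^2)_n(q^4;q^2)_n=(q^3;q)_{2n}=(q;q)_{2n+2}/((1-q)(1-q^2))$, and the prefactor $(q;q)_2$ cancels this denominator. Next split $(q;q)_{2n+2}=(q;q^2)_{n+1}(q^2;q^2)_{n+1}$ and use
\[
\frac{(q^2;q^2)_{n+1}}{(q^4;q^4)_n}=\frac{1-q^{2n+2}}{(-q^2;q^2)_n}.
\]
The series then becomes
\[
(1-q)\sum_{n\ge0}\frac{(q^3;q^2)_n}{(-q^2;q^2)_n}(1-q^{2n+2})q^{2n}.
\]

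\textbf{Step 2: evaluate it with \eqref{ie4}.} Write $1-q^{2n+2}=2-(1+q^{2n+2})$ and use $(-q^2;q^2)_n(1+q^{2n+2})=(-q^2;q^2)_{n+1}$. This splits the series into two sums of the form $\sum (a;q^2)_n q^{2n}/(b;q^2)_n$, one with $a=q^3$, $b=-q^2$, and the other with the denominator shifted. Each is an instance of \eqref{ie4} in base $q^2$, after adjusting for the $n=0$ term and the index shift. The infinite-product parts are multiples of $(q^3;q^2)_\infty/(-q^2;q^2)_\infty$ or of its shifted analogues. Multiplied by $(q^4;q^4)_\infty/(q;q)_\infty$, the identity
\[
\frac{(q^4;q^4)_\infty (q;q^2)_\infty}{(q;q)_\infty(-q^2;q^2)_\infty}=\frac{(q^4;q^4)_\infty}{(q^2;q^2)_\infty(-q^2;q^2)_\infty}=1
\]
shows that these parts become rational functions of $q$. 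The other parts of \eqref{ie4} give rational multiples of $(q^4;q^4)_\infty/(q;q)_\infty$.

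\textbf{Step 3: compare coefficients.} Altogether we should obtain
\[
\sum_m DE_{\geq3}(m)q^m=A(q)\frac{(q^4;q^4)_\infty}{(q;q)_\infty}+R(q),
\]
with $A$ and $R$ rational. Multiplying by $P(q)$ should make $P(q)A(q)$ the polynomial $-1-3q+3q^2-q^5+q^6$ and make $P(q)R(q)$ a polynomial of degree at most $9$. Taking coefficients of $q^n$ for $n\ge10$, and using \eqref{regg}, then gives \eqref{t3e}.

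The main obstacle is the bookkeeping in Step 2. One must apply \eqref{ie4} correctly to the shifted sum, since its summation range is $n>0$ and its parameters are modified. One must also check that all the denominators $1-\alpha q/\beta$ that appear, such as $1+q^3$ and $1+q^5$, are cleared by $P(q)$. Finally one must confirm that the leftover polynomial has degree below $10$, which accounts for the hypothesis $n\ge10$. If a discrepancy appears, I would first recheck the $n=0$ boundary terms against a direct computation of $DE_{\geq3}(m)$ for small $m$.
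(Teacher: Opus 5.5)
Your route is the same as the paper's: specialise Theorem~\ref{t1} at $k=3$, reduce the series to $(1-q)\sum_{n\ge0}\frac{(q^3;q^2)_n}{(-q^2;q^2)_n}(1-q^{2n+2})q^{2n}$, sum with \eqref{ie4}, and use $\frac{(q^4;q^4)_\infty(q;q^2)_\infty}{(q;q)_\infty(-q^2;q^2)_\infty}=1$. Your Step~1 is correct.

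\textbf{The decisive problem: the statement is false.} Step~3 cannot come out as you expect, because \eqref{t3e} does not hold. At $n=10$ the left side is $DE_{\ge3}(9)-DE_{\ge3}(8)+DE_{\ge3}(6)-DE_{\ge3}(5)+DE_{\ge3}(4)-DE_{\ge3}(3)+DE_{\ge3}(1)-DE_{\ge3}(0)=2-1+1-1+1-1+0-0=1$. The right side is $-29-3\cdot22+3\cdot16-6+4=-49$. Carried out correctly, your computation gives $\sum_{n\ge0}\frac{(q^3;q^2)_n}{(-q^2;q^2)_n}q^{2n}=\frac{2-\Pi}{1+q^3}$ with $\Pi=\frac{(q^3;q^2)_\infty}{(-q^2;q^2)_\infty}$, and in the end
\[
(1+q^3)(1+q^5)\sum_{m\ge0}DE_{\ge3}(m)q^m=(1-q)(1-q^4)\frac{(q^4;q^4)_\infty}{(q;q)_\infty}-(1+q^2-q^4+q^5).
\]
So $A(q)=\frac{(1-q)(1-q^4)}{(1+q^3)(1+q^5)}$ and $P(q)A(q)=q-2q^2+q^3-q^5+2q^6-q^7$, not $-1-3q+3q^2-q^5+q^6$. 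What is actually true is: for $n\ge6$, $DE_{\ge3}(n)+DE_{\ge3}(n-3)+DE_{\ge3}(n-5)+DE_{\ge3}(n-8)=reg_4(n)-reg_4(n-1)-reg_4(n-4)+reg_4(n-5)$. Equivalently, for $n\ge8$ the left side of \eqref{t3e} equals $reg_4(n-1)-2reg_4(n-2)+reg_4(n-3)-reg_4(n-5)+2reg_4(n-6)-reg_4(n-7)$. This checks numerically: both sides are $1$ at $n=10$ and $3$ at $n=15$.

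\textbf{Step~2 also fails as written.} The factor $1+q^{2n+2}$ sits in the numerator, so $(-q^2;q^2)_n(1+q^{2n+2})=(-q^2;q^2)_{n+1}$ cannot absorb it. Hence $\sum_n\frac{(q^3;q^2)_n(1+q^{2n+2})}{(-q^2;q^2)_n}q^{2n}$ is not of the type \eqref{ie4} sums. A split that works is $1-q^{2n+2}=(1+q^2)-q^2(1+q^{2n})$, together with $\frac{1+q^{2n}}{(-q^2;q^2)_n}=\frac{2}{(-1;q^2)_n}$. Note also that \eqref{ie4} holds with the sum over $n\ge0$. The printed range $n>0$ is a misprint, and using it literally shifts each evaluation by $1$.

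The paper reaches \eqref{t3e} only through slips of this kind along the same route. It evaluates $\sum_n\frac{(q^3;q^2)_n}{(-q^2;q^2)_n}q^{2n}$ with $\frac{1+q}{1+q^3}$ in place of $\frac{2}{1+q^3}$. Its formula \eqref{pt3e1} rests on mishandled factors of $1-q$ and $1-q^2$; its right side has a pole at $q=0$, while its left side is a power series.

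So the discrepancy you anticipate in your last sentence will appear, but it is not a boundary-term issue. No correct argument reaches \eqref{t3e}, and the target must be replaced by the corrected identity above.
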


Using the \(q\)-series identities derived by Andrews and Bachraoui in \cite{DE} and identities (\ref{t1e}), (\ref{t2e}), we derive  relations for $DE_e(n)$, $DE_{e1}(n)$, and $DE_{e\geq2}(n)$ which are presented  in Theorems \ref{t5}-\ref{t7} below:
\begin{theorem}\label{t5}
 For $n \geq 3$, we have
 \begin{equation}\label{t5e}
     DE_e(n) = \sum_{i=0}^{\lfloor \frac{n-1}{3}\rfloor} (-1)^ireg_4(n-1-3i), 
 \end{equation}where $\lfloor \cdot\rfloor$ denotes the floor function.
  \end{theorem}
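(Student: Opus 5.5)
The plan is to reduce the statement to the Andrews--Bachraoui relation (iii) for $\operatorname{DE3}$, and then telescope.

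\textbf{Step 1: $DE_e$ is a shift of $\operatorname{DE3}$.} Comparing \eqref{d3e} with \eqref{ie9}, the $n$-th summand of \eqref{ie9} is exactly $q$ times the $n$-th summand of \eqref{d3e}. Hence
\[
\sum_{n\ge 0}\operatorname{DE3}(n)q^n=q\sum_{n\ge0}DE_e(n)q^n,
\]
so $DE_e(n)=\operatorname{DE3}(n+1)$ for all $n\ge 0$. Combinatorially, this removes one copy of the odd largest part and moves the resulting $1$ elsewhere. For the proof, however, I only need the generating-function identity, which is immediate.

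\textbf{Step 2: a three-term recurrence.} Substituting into relation (iii) of \cite[Corollary 3]{DE}, namely $\operatorname{DE3}(m+2)+\operatorname{DE3}(m-1)=reg_4(m)$ for $m>1$, gives
\[
DE_e(m+1)+DE_e(m-2)=reg_4(m).
\]
Setting $n=m+1$, this reads $DE_e(n)+DE_e(n-3)=reg_4(n-1)$ for $n\ge 3$. At the level of series, I would rederive this as the identity
\[
(1+q^3)\sum_{n\ge0}DE_e(n)q^n=q\,\frac{\qPoch{q^4}{q^4}{\infty}}{\qPoch{q}{q}{\infty}}+P(q),
\]
where $P(q)$ is a polynomial of low degree. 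Doing this in the same way the cited corollary is obtained, via \eqref{ie4} with $\alpha=-q^2$, $\beta=q$ in base $q^2$, pins down $P$ exactly. Knowing $P$ explicitly is what controls the initial range of validity.

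\textbf{Step 3: iterate.} Apply the recurrence repeatedly:
\[
DE_e(n)=reg_4(n-1)-DE_e(n-3)=reg_4(n-1)-reg_4(n-4)+DE_e(n-6)=\cdots.
\]
Continue until the argument of $DE_e$ drops into $\{0,1,2\}$, or equivalently until $n-1-3i<0$. This produces $\sum_{i}(-1)^i reg_4(n-1-3i)$ with $i$ running up to $\lfloor (n-1)/3\rfloor$, plus a possible boundary term $\pm DE_e(r)$ with $r\in\{0,1,2\}$, together with any correction coming from $P$.

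\textbf{Main obstacle: the boundary.} The hard part is showing that the leftover initial terms cancel. I would compute directly from \eqref{d3e}: $DE_e(0)=1$, $DE_e(1)=0$, $DE_e(2)=1$. These must be compared against the coefficients of $P(q)$ and against the exact range ($m>1$) in which relation (iii) holds, and this has to be checked separately in each residue class of $n$ modulo $3$. A quick test already raises a problem. For $n=3$ the right-hand side of \eqref{t5e} is $reg_4(2)=2$, whereas $DE_e(3)=1$ (only $2+1$), and the recurrence gives $DE_e(3)=reg_4(2)-DE_e(0)$. So when $n\equiv 0\pmod 3$ the telescoped term $(-1)^{n/3}DE_e(0)$ survives. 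I therefore expect that the statement needs either an extra term $(-1)^{n/3}$ when $3\mid n$, or a convention that excludes the empty partition from $DE_e(0)$. Resolving this precisely, using the explicit $P(q)$, is the step I expect to require the most care.
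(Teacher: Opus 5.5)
Your route is the same as the paper's. You use $DE_e(n)=\operatorname{DE3}(n+1)$, and \eqref{an1} then gives $(1+q^3)\sum_{n\ge0}DE_e(n)q^n=q\,\qPoch{q^4}{q^4}{\infty}/\qPoch{q}{q}{\infty}+1-q$, so your unknown $P(q)$ is simply $1-q$. Both arguments then iterate \eqref{pt5e4}. Your test at $n=3$ is correct and decisive: $DE_e(3)=1$, but the right side of \eqref{t5e} is $reg_4(2)=2$. So the statement cannot be proved as printed. The paper's final iteration is where its proof breaks, because it runs the recurrence below the range where it holds.

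Your own boundary analysis, however, is incomplete, and the repair you propose does not work.

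The coefficient of $q^1$ in the identity above gives $DE_e(1)=reg_4(0)-1=0$. Hence $DE_e(m)=reg_4(m-1)-DE_e(m-3)$ holds only for $m\ge2$, and your two stopping rules in Step 3 are not equivalent. For $n\equiv1\pmod 3$ the iteration ends with $DE_e(4)=reg_4(3)-DE_e(1)=reg_4(3)$. So the last term produced is $reg_4(3)$, while \eqref{t5e} also contains $(-1)^{(n-1)/3}reg_4(0)$.

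For instance, $DE_e(4)=3$ (the partitions $4$, $2+2$, $2+1+1$), but \eqref{t5e} gives $reg_4(3)-reg_4(0)=2$. Likewise $DE_e(7)=6$, against $9-3+1=7$. Adding $(-1)^{n/3}$ only when $3\mid n$ therefore leaves every $n\equiv1\pmod 3$ false.

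The convention $DE_e(0)=0$ is no remedy either. $DE_e(3)=1$ does not depend on $DE_e(0)$, and the term $-DE_e(0)$ in the recurrence at $n=3$ is forced by the generating function.

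Expanding $\bigl(1-q+q\sum_{n\ge0}reg_4(n)q^n\bigr)/(1+q^3)$ gives the correct statement, valid for all $n\ge0$: $DE_e(n)=\sum_{i=0}^{\lfloor (n-2)/3\rfloor}(-1)^i reg_4(n-1-3i)$, plus $(-1)^{n/3}$ when $3\mid n$. In particular, for $n\ge3$, \eqref{t5e} holds exactly when $n\equiv2\pmod 3$.
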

\begin{theorem}\label{t6}
For \(n\geq2\), we have
\begin{equation}\label{t6e}
    DE_{e1}(n) = \sum_{i=0}^{n-2} (-1)^ireg_4(n-1-i).
\end{equation}
\end{theorem}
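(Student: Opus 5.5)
The plan is to recognise the generating function of $DE_{e1}(n)$ as a shifted copy of the generating function of $\operatorname{DE1}(n)$, and then to invoke the relation of Andrews and Bachraoui connecting $\operatorname{DE1}$ with $reg_4$.

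\textbf{Step 1: reduce to $\operatorname{DE1}$.} Setting $k=1$ in \eqref{d4e} gives
\[
\sum_{n\ge 0}DE_{e1}(n)q^n=\sum_{n\ge0}\frac{\qPoch{-q^2}{q^2}{n}q^{2n+2}}{\qPoch{q}{q^2}{n+1}}
= q\sum_{n\ge0}\frac{\qPoch{-q^2}{q^2}{n}q^{2n+1}}{\qPoch{q}{q^2}{n+1}}
= q\sum_{n\ge0}\operatorname{DE1}(n)q^n,
\]
by \eqref{ie7}. Combinatorially, this says that adding $1$ to the odd largest part turns it into an even largest part appearing once.

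\textbf{Step 2: use relation (i).} By (i), $\operatorname{DE1}(n)+\operatorname{DE1}(n-1)=reg_4(n)$ for $n>0$. Moreover $\operatorname{DE1}(0)=0$, since every term of \eqref{ie7} carries a factor $q^{2n+1}$. Hence
\[
(1+q)\sum_{n\ge0}\operatorname{DE1}(n)q^n=\sum_{n\ge0}reg_4(n)q^n-1=\frac{\qPoch{q^4}{q^4}{\infty}}{\qPoch{q}{q}{\infty}}-1,
\]
using \eqref{regg} with $reg_4(0)=1$. Alternatively, this identity can be proved directly: write $\frac{1}{1-q^{2n+1}}=1+\frac{q^{2n+1}}{1-q^{2n+1}}$ and telescope, or apply \eqref{ie4}. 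However, since the paper quotes (i) from \cite{DE}, I will simply invoke it. Consequently,
\[
\sum_{n\ge0}DE_{e1}(n)q^n=\frac{q}{1+q}\Bigl(\sum_{m\ge0}reg_4(m)q^m-1\Bigr)=\sum_{j\ge0}(-1)^jq^{j+1}\sum_{m\ge1}reg_4(m)q^m .
\]

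\textbf{Step 3: extract coefficients.} The coefficient of $q^n$ comes from the pairs with $j+1+m=n$ and $m\ge1$, so $j$ runs from $0$ to $n-2$. This gives
\[
DE_{e1}(n)=\sum_{i=0}^{n-2}(-1)^i reg_4(n-1-i),
\]
which is \eqref{t6e}. The hypothesis $n\ge2$ is exactly what makes the range of $i$ nonempty.

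The only point needing care is the constant term, namely the subtraction of $1=reg_4(0)$. This is why the sum stops at $i=n-2$ rather than $i=n-1$. Otherwise the argument is a direct consequence of (i), and I do not expect a genuine obstacle.
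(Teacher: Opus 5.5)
Your proposal is correct and follows essentially the paper's route: you shift to $\sum DE_{e1}(n)q^n=q\sum\operatorname{DE1}(n)q^n$ and then apply the Andrews--Bachraoui identity $(1+q)\sum\operatorname{DE1}(n)q^n=\frac{\qPoch{q^4}{q^4}{\infty}}{\qPoch{q}{q}{\infty}}-1$. The only difference is cosmetic: you extract coefficients by expanding $q/(1+q)$ as a geometric series, while the paper derives the recurrence $DE_{e1}(n+1)+DE_{e1}(n)=reg_4(n)$ and iterates it, implicitly using $DE_{e1}(1)=0$, a boundary term your version handles automatically.
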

\begin{theorem}\label{t7}
     For \(n\geq2\), we have
\begin{equation}\label{t7e}
   DE_{e\geq2}(n-2)+DE_{\geq3}(n)=\sum_{i=0}^{n-1} (-1)^ireg_{4>1}(n-3i),
\end{equation}
where \(reg_{4>1}(n)\) counts the number of \(4\)-regular partitions of a non-negative integer \(n\) with all parts \(>1\).
\end{theorem}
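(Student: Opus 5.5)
The plan is to prove the identity at the level of generating functions. The left-hand side of \eqref{t7e} should collapse exactly to the generating function of $\operatorname{DE2}(n)=DE_{\geq 2}(n)$, after which the result of Andrews and Bachraoui, item (ii) of the Introduction, finishes the argument.

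\textbf{Step 1 (left-hand side).} By \eqref{d5e} with $k=2$ and \eqref{s2e1} with $k=3$,
\begin{equation*}
\sum_{n\ge 0}\bigl(DE_{e\geq2}(n-2)+DE_{\geq3}(n)\bigr)q^n=\sum_{n\ge0}\frac{\qPoch{-q^2}{q^2}{n}q^{4n+2}}{\qPoch{q}{q^2}{n}}+\sum_{n\ge0}\frac{\qPoch{-q^2}{q^2}{n}q^{6n+3}}{\qPoch{q}{q^2}{n+1}}.
\end{equation*}
Now use the elementary splitting
\begin{equation*}
\frac{q^{4n+2}}{1-q^{2n+1}}=q^{4n+2}+\frac{q^{6n+3}}{1-q^{2n+1}}.
\end{equation*}
Multiplying it by $\qPoch{-q^2}{q^2}{n}/\qPoch{q}{q^2}{n}$ shows that the $n$-th summand of \eqref{ie8} is exactly the sum of the two $n$-th summands above. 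Hence the left-hand side generating function equals $\sum_{n\ge0}\operatorname{DE2}(n)q^n$.

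Combinatorially, this step splits the partitions counted by $\operatorname{DE2}(n)$ (largest part $2m+1$, repeated at least twice) into two classes. If the largest part appears exactly twice, deleting those two parts leaves the remaining factor $\qPoch{-q^2}{q^2}{m}/\qPoch{q}{q^2}{m}$. If it appears at least three times, we get a $DE_{\geq3}$ partition. Only the algebraic identity is needed for the proof, however.

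\textbf{Step 2 (right-hand side).} The generating function of $reg_{4>1}(n)$ is
\begin{equation*}
\frac{(1-q)\qPoch{q^4}{q^4}{\infty}}{\qPoch{q}{q}{\infty}},
\end{equation*}
since removing parts equal to $1$ from $4$-regular partitions multiplies \eqref{regg} with $t=4$ by $(1-q)$. Multiplying by $1/(1+q^3)=\sum_{i\ge0}(-1)^iq^{3i}$ produces $\sum_i(-1)^i reg_{4>1}(n-3i)$, with the convention $reg_{4>1}(m)=0$ for $m<0$. Under this convention the upper limit $n-1$ in \eqref{t7e} is harmless: every extra term has negative argument and vanishes. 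So the claim is equivalent to
\begin{equation*}
(1+q^3)\sum_{n\ge0}\operatorname{DE2}(n)q^n=\frac{(1-q)\qPoch{q^4}{q^4}{\infty}}{\qPoch{q}{q}{\infty}}
\end{equation*}
for all coefficients of $q^n$ with $n\ge2$. This is precisely item (ii), namely $\operatorname{DE2}(n)+\operatorname{DE2}(n-3)=reg_{4>1}(n)$ for $n>0$, in the form proved in \cite{DE}.

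\textbf{Main obstacle.} The only delicate point is bookkeeping of the small coefficients and the range of validity. The two sides can differ by a constant, or by a low-degree polynomial, coming from the constant term $1$ of the $reg_{4>1}$ series and from $DE_{e\geq2}(0)=1$ shifted to $q^2$. I will check these directly: at $n=0$ and $n=1$ the two sides differ, whereas for $n\ge 2$ they agree. This is why the statement is restricted to $n\ge2$. If the form of (ii) in \cite{DE} carries an explicit correction term, I will substitute it and verify that it affects only degrees below $2$.
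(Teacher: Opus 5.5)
Your Step 1 is the paper's first step: the same splitting gives $DE_{e\geq2}(n-2)=DE_{\geq2}(n)-DE_{\geq3}(n)$. Your Step 2 uses the same Andrews--El Bachraoui relation (ii) that the paper iterates. The gap is the claimed equivalence in Step 2.

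Write $F(q)=\sum_{n\ge0}\operatorname{DE2}(n)q^n$ and $\mathcal{R}(q)=(1-q)\qPoch{q^4}{q^4}{\infty}/\qPoch{q}{q}{\infty}$. Relation (ii) holds only for $n>0$. At $n=0$ you have $\operatorname{DE2}(0)=0$ but $reg_{4>1}(0)=1$ (the empty partition), so what (ii) gives exactly is
\[
(1+q^3)F(q)=\mathcal{R}(q)-1 .
\]
Agreement of $(1+q^3)F$ with $\mathcal{R}$ in degrees $\ge 2$ is not equivalent to agreement of $F$ with $\mathcal{R}/(1+q^3)$ in degrees $\ge2$. Dividing by $1+q^3$ turns the degree-$0$ defect $-1$ into $-\sum_{i\ge0}(-1)^iq^{3i}$, which affects every multiple of $3$. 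Hence
\[
\operatorname{DE2}(n)=\sum_{i\ge0}(-1)^i reg_{4>1}(n-3i)-\varepsilon_n,
\]
where $\varepsilon_n=(-1)^{n/3}$ if $3\mid n$ and $\varepsilon_n=0$ otherwise. So the displayed identity, read with $reg_{4>1}(0)=1$ as your generating function implies, fails for every $n\equiv0\pmod 3$. At $n=3$ the left side is $DE_{e\geq2}(1)+DE_{\geq3}(3)=0+1=1$ (the partition $1+1+1$), while the right side is $reg_{4>1}(3)-reg_{4>1}(0)=1-1=0$. Your remark that the upper limit $n-1$ is harmless misses exactly the term $i=n/3$, whose argument is $0$, not negative. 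Checking only $n=0,1$, as you planned, cannot catch this.

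To obtain a true statement, do one of the following:
\begin{enumerate}
\item Set $reg_{4>1}(0):=0$. Then its generating function is $\mathcal{R}(q)-1$ and the identity is exact for all $n$.
\item Cut the sum at $\lfloor (n-1)/3\rfloor$, keeping only the terms with $n-3i\ge1$.
\end{enumerate}
The paper's own iteration has the same defect. It applies $DE_{\geq2}(m)=reg_{4>1}(m)-DE_{\geq2}(m-3)$ down to $m=0$, where it is false. Stopping at the last $m\ge1$ gives the $\lfloor (n-1)/3\rfloor$ version, because $DE_{\geq2}$ vanishes at $0,-1,-2$.
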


Further, we prove some new congruences modulo \(2,\) \( 4,\) and \(8\) of \(DE_e(n)\), and \(DE_{e1}(n)\) which can be stated in the Theorems \ref{t8}-\ref{t15}. In the following theorems, 
let \(T_j = \dfrac{j(j+1)}{2}\) denote  triangular number and \(j\in\mathbb{N}\cup\{0\}\).

\begin{theorem}\label{t8} We have
\begin{enumerate}
    \item [\((i)\)] For \(n\geq T_j+2\) and \(n\neq2T_j\),  
 $$\sum_{j=0}^{\infty} DE_e(n-T_j+1)\equiv\sum_{j=0}^{\infty}DE_e(n-T_j-2)\pmod2.$$ 
    \item[\((ii)\)]  For \(n\geq T_j\) and \(n\neq2T_j\),  
     $$\sum_{j=0}^{\infty} DE_{e1}(n-T_j)\equiv\sum_{j=0}^{\infty}DE_{e1}(n-T_j+1)\pmod2.$$
\end{enumerate}
\end{theorem}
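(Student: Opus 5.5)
The plan is to turn Theorems \ref{t5} and \ref{t6} into closed-form generating functions, reduce them modulo $2$ using Jacobi's identity, and then multiply by $\psi(q):=\sum_{j\geq0}q^{T_j}$ so that the squaring trick $\psi(q)^2\equiv\psi(q^2)\pmod 2$ applies.

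\emph{Step 1: generating functions.} Theorem \ref{t5} says that, for $n\geq3$, $DE_e(n)$ is the coefficient of $q^n$ in
\[
\frac{q}{1+q^3}\cdot\frac{\qPoch{q^4}{q^4}{\infty}}{\qPoch{q}{q}{\infty}}.
\]
Similarly, Theorem \ref{t6} says that, for $n\geq2$, $DE_{e1}(n)$ is the coefficient of $q^n$ in
\[
\frac{q}{1+q}\cdot\frac{\qPoch{q^4}{q^4}{\infty}}{\qPoch{q}{q}{\infty}},
\]
apart from the truncated final term of the alternating sum. I will compute $DE_e(0),DE_e(1),DE_e(2)$ and $DE_{e1}(0),DE_{e1}(1)$ directly from \eqref{d3e} and \eqref{d4e}. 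I will also check the last term $i=n-1$ in \eqref{t6e}, which corresponds to $reg_4(0)$. This yields
\[
\sum DE_e(n)q^n=\frac{q}{1+q^3}\,\frac{\qPoch{q^4}{q^4}{\infty}}{\qPoch{q}{q}{\infty}}+P(q),
\]
and the analogous identity for $DE_{e1}$ with a polynomial $Q(q)$, where $P$ and $Q$ are explicit polynomials of low degree.

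\emph{Step 2: reduction mod $2$.} Since $\qPoch{q^4}{q^4}{\infty}\equiv\qPoch{q}{q}{\infty}^4\pmod 2$, we get
\[
\frac{\qPoch{q^4}{q^4}{\infty}}{\qPoch{q}{q}{\infty}}\equiv\qPoch{q}{q}{\infty}^3=\sum_{j\ge0}(-1)^j(2j+1)q^{T_j}\equiv\psi(q)\pmod 2
\]
by Jacobi's identity. Writing $F(q)=\sum DE_e(n)q^n$, this gives $(1+q^3)F(q)\equiv q\psi(q)+(1+q^3)P(q)\pmod 2$. Multiplying by $\psi(q)$ and using $\psi(q)^2\equiv\psi(q^2)\pmod 2$ (Frobenius), we obtain
\[
(1+q^3)\psi(q)F(q)\equiv q\,\psi(q^2)+(1+q^3)P(q)\psi(q)\pmod 2 .
\]

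\emph{Step 3: comparing coefficients.} Take the coefficient of $q^{n+1}$. On the left it is $\sum_j DE_e(n+1-T_j)+\sum_j DE_e(n-2-T_j)$, with the convention that terms of negative index vanish; this is what the range condition $n\geq T_j+2$ encodes. On the right, $q\psi(q^2)$ contributes $1$ exactly when $n=2T_j$, and these values are excluded by hypothesis. The same computation with $1+q$ in place of $1+q^3$ gives part (ii): the coefficient of $q^n$ (or $q^{n+1}$) yields $\sum_j DE_{e1}(n-T_j)+\sum_j DE_{e1}(n\pm1-T_j)\equiv0$ away from $n=2T_j$. I will have to track the index shift carefully to match the stated form.

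\emph{Main obstacle.} The hardest part will be the boundary term $(1+q^3)P(q)\psi(q)$, respectively $(1+q)Q(q)\psi(q)$. One must show either that $P\equiv0\pmod2$ (i.e.\ Theorem \ref{t5} actually holds for all $n$ under the vanishing convention), or that the correction only affects coefficients excluded by the hypotheses $n\geq T_j+2$, $n\geq T_j$. I will first check directly whether the formulas of Theorems \ref{t5} and \ref{t6} hold for the small $n$ as well. If they do not, I will compute the finitely many discrepancies and verify that they are even or land on excluded indices.
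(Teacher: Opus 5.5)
\textbf{There is a genuine gap, and it is the step you yourself flagged as the main obstacle.} The boundary contribution can neither be shown to vanish mod $2$ nor be confined to excluded indices, because the congruences as stated are false.

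\emph{Step 1 does not give a polynomial correction.} The alternating sum in Theorem \ref{t5} is off by $\pm1$ whenever $n\not\equiv 2\pmod 3$. For example, it gives $2$ for both $DE_e(3)$ and $DE_e(4)$, whereas \eqref{d3e} gives $1$ and $3$. The sum in Theorem \ref{t6} stops at $i=n-2$, so it differs from the coefficient of $q^n$ in $\frac{q}{1+q}R(q)$ by $(-1)^{n-1}$ for every $n$. The discrepancies are therefore infinitely many, not finitely many.

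\emph{The correct generating functions.} Bypass Theorems \ref{t5} and \ref{t6} and use \eqref{an1}, \eqref{an2} directly. With $R(q)=\qPoch{q^4}{q^4}{\infty}/\qPoch{q}{q}{\infty}$ they say
\[
(1+q^3)\sum_{n\ge0}DE_e(n)q^n=qR(q)+1-q,\qquad (1+q)\sum_{n\ge0}DE_{e1}(n)q^n=qR(q)-q .
\]
So $P=(1-q)/(1+q^3)$ and $Q=-q/(1+q)$. Your Steps 2--3 then give exactly
\[
(1+q^3)\psi(q)\sum_{n\ge0}DE_e(n)q^n\equiv q\psi(q^2)+(1+q)\psi(q),\qquad (1+q)\psi(q)\sum_{n\ge0}DE_{e1}(n)q^n\equiv q\psi(q^2)+q\psi(q)\pmod 2 .
\]
Write $\mathcal{T}=\{T_j:j\ge0\}$ and $[\,\cdot\,]$ for the indicator. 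Taking the coefficient of $q^{n+1}$, modulo $2$:
\begin{itemize}
\item the two sides of (i) differ by $[n\in2\mathcal{T}]+[n\in\mathcal{T}]+[n+1\in\mathcal{T}]$;
\item the two sides of (ii) differ by $[n\in2\mathcal{T}]+[n\in\mathcal{T}]$.
\end{itemize}
The extra indicators are odd for infinitely many $n\notin 2\mathcal{T}$.

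\emph{Explicit counterexamples.} From \eqref{d3e} and \eqref{d4e}, $DE_e(0),\dots,DE_e(4)=1,0,1,1,3$ and $DE_{e1}(0),DE_{e1}(1),DE_{e1}(2)=0,0,1$.
\begin{itemize}
\item For (i) with $n=3$, the sides are $DE_e(4)+DE_e(3)\,(+DE_e(1))=4$ and $DE_e(1)+DE_e(0)=1$.
\item For (ii) with $n=1$, the sides are $DE_{e1}(1)+DE_{e1}(0)=0$ and $DE_{e1}(2)+DE_{e1}(1)=1$.
\end{itemize}
Neither $3$ nor $1$ has the form $2T_j$, and both failures occur under either reading of the range condition $n\ge T_j+2$, resp.\ $n\ge T_j$.

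\textbf{Comparison with the paper.} The paper's proof follows essentially your route. It uses the exact identity $R(q)\sum_{k\ge0}(-1)^{\lceil k/2\rceil}q^{T_k}=\psi(q^2)$, which is the mod-$2$ content of your $R\equiv\psi$ and $\psi(q)^2\equiv\psi(q^2)$. It has the same hole. In \eqref{pt89e7}--\eqref{pt89e8} the leading term $reg_4(0)$ (and, with the paper's range, $reg_4(1)q$) is multiplied by the theta series and then ignored when coefficients are compared. Since $reg_4(0)=1$ while $DE_e(1)+DE_e(-2)=DE_{e1}(1)+DE_{e1}(0)=0$, this product cannot be absorbed into the $DE$ sums. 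Moreover, $reg_4(0)$ times the theta series is odd at every triangular exponent.

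So no refinement of your boundary bookkeeping will prove the statement as written. What your method does establish is the corrected congruence with the indicator terms above. Among the $n\notin2\mathcal{T}$ allowed by the statement, (ii) holds exactly when $n$ is not triangular. In your indexing, (i) holds exactly when neither $n$ nor $n+1$ is triangular.
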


\begin{theorem}\label{t10} We have
\begin{enumerate}
    \item [\((i)\)] For \(n \geq 2j^2+2\) and \(n\neq T_j\), 
 $$\sum_{j=-\infty}^{\infty} DE_e(n-2j^2-2)\equiv\sum_{j=-\infty}^{\infty}DE_e(n-2j^2+1)\pmod2,$$
    \item[\((ii)\)]  For \(n \geq 2j^2,\) and \(n\neq T_j\),  $$ \sum_{j=-\infty}^{\infty} DE_{e1}(n-2j^2+1)\equiv\sum_{j=-\infty}^{\infty}DE_{e1}(n-2j^2)\pmod2.$$
\end{enumerate}
\end{theorem}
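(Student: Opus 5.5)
The plan is to turn the relations of Theorems \ref{t5} and \ref{t6} into closed forms for the generating functions, reduce modulo $2$, and multiply by the theta series $\varphi(q^2)=\sum_{j=-\infty}^{\infty}q^{2j^2}$. The key observation is that $\varphi(q^2)=1+2\sum_{j\geq1}q^{2j^2}\equiv 1 \pmod 2$. So the sums over $j$ in the statement are coefficients of a product that is congruent modulo $2$ to something we can identify directly.

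\textbf{Step 1 (generating functions).} Write $G_e(q)=\sum_{n\ge0}DE_e(n)q^n$ and $G_1(q)=\sum_{n\ge0}DE_{e1}(n)q^n$. Relation \eqref{t5e} says that $DE_e(n)+DE_e(n-3)=reg_4(n-1)$ for $n$ large enough. Hence
\[
(1+q^3)G_e(q)=q\frac{(q^4;q^4)_\infty}{(q;q)_\infty}+P_e(q),
\]
where $P_e(q)$ is a polynomial of low degree. It collects the finitely many initial terms ($n\le 5$) at which \eqref{t5e} is not asserted, and I will compute these terms by hand from the definitions. Similarly, \eqref{t6e} gives
\[
(1+q)G_1(q)=q\frac{(q^4;q^4)_\infty}{(q;q)_\infty}+P_1(q),
\]
with $P_1(q)$ again a low-degree polynomial.

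\textbf{Step 2 (reduction mod $2$).} Write $f_k=(q^k;q^k)_\infty$. Modulo $2$ we have $f_4\equiv f_1^4$, so $f_4/f_1\equiv f_1^3$. Jacobi's identity gives
\[
f_1^3=\sum_{j\ge0}(-1)^j(2j+1)q^{T_j}\equiv \psi(q):=\sum_{j\ge0}q^{T_j}\pmod 2 .
\]
Therefore
\[
\varphi(q^2)\,(1+q^3)\,G_e(q)\equiv q\,\psi(q)+\varphi(q^2)P_e(q)\pmod 2,
\]
and the analogous congruence holds for $G_1$.

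\textbf{Step 3 (extract coefficients).} For part $(i)$, multiply by $q$. Modulo $2$, the sign in $q^{-1}(\cdots)$ versus $(\cdots)$ is irrelevant, so the coefficient of $q^{n+1}$ in $\varphi(q^2)(q^{3}+1)G_e(q)$ is congruent to
\[
\sum_j DE_e(n-2j^2-2)+\sum_j DE_e(n-2j^2+1) \pmod 2 .
\]
Here I will match the shifts in the statement to the exponents $q^{2j^2}$ and $q^{2j^2+3}$, adjusting the overall power of $q$ so that it agrees exactly with the indices $n+1$ and $n-2$. By Step 2, this coefficient is congruent to the coefficient of $q^{n}$ in $\psi(q)$, up to the polynomial correction. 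That coefficient is $1$ if $n$ is triangular and $0$ otherwise. So for $n\neq T_j$ the sum is even, which is the claimed congruence, with $-\equiv+$ modulo $2$. Part $(ii)$ is the same argument using $(1+q)G_1$, which produces the adjacent shifts $n-2j^2+1$ and $n-2j^2$.

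\textbf{Main obstacle.} The delicate point is the boundary polynomials $P_e$ and $P_1$, together with the exact alignment of the index shifts. I need $\varphi(q^2)P_e(q)$ to contribute nothing modulo $2$ in the range $n\ge 2j^2+2$ (respectively $n\ge 2j^2$). Since $\varphi(q^2)\equiv1$, this reduces to checking that the coefficients of $P_e$ and $P_1$ are even, or that they are supported below the stated range. I would first compute $DE_e(n)$ and $DE_{e1}(n)$ for $n\le 5$ directly from \eqref{d3e} and \eqref{d4e} and compare them with the right-hand sides of \eqref{t5e} and \eqref{t6e}. If odd discrepancies appear, the hypotheses on $n$ in the theorem will need to exclude exactly those small values.
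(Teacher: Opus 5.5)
Your argument is correct. It has the same skeleton as the paper's proof: write the $reg_4$ recurrences for $DE_e$ and $DE_{e1}$ as generating-function identities, multiply by a theta series in $q^2$, and compare coefficients mod $2$. The theta step, however, is handled differently.

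The paper proves the exact identity \eqref{pt1011e4}, namely $\varphi(-q^2)\,\frac{(q^4;q^4)_\infty}{(q;q)_\infty}=\frac{(q^2;q^2)_\infty^2}{(q;q)_\infty}=\psi(q)$, from the triple product, and reduces mod $2$ only at the end. You reduce first, using $f_4\equiv f_1^4$, Jacobi's formula for $f_1^3$, and $\varphi(q^2)\equiv 1 \pmod 2$.

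The paper's route yields a signed identity over $\mathbb{Z}$, which is more than is needed. Yours makes visible that the terms $\pm j$ with $j\neq 0$ cancel in pairs. So the theorem is really its $j=0$ term, $DE_e(n+1)+DE_e(n-2)=reg_4(n)\equiv[n \text{ triangular}]$, which is the ``if'' half of Corollary \ref{c12}. Your route also forces you to track the boundary terms that the paper glosses over.

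Those boundary terms are immediate from \eqref{an1} and \eqref{an2}: $(1+q^3)G_e(q)=q\frac{(q^4;q^4)_\infty}{(q;q)_\infty}+1-q$ and $(1+q)G_1(q)=q\frac{(q^4;q^4)_\infty}{(q;q)_\infty}-q$. Thus $P_e=1-q$ only affects $n\in\{-1,0\}$, which lies outside the range of (i). Likewise $P_1=-q$ only affects $n=0=T_0$, which (ii) excludes. At $n=0$ the sum is $DE_{e1}(1)+DE_{e1}(0)=0$, so the paper's listed consequence $DE_{e1}(0)+DE_{e1}(1)\equiv 1$ is in fact false. Your bookkeeping is what catches this.

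One caution about Step 1: take the recurrence from \eqref{an1} (equivalently \eqref{pt5e4}), not from \eqref{t5e}. As printed, \eqref{t5e} drops the terminal value of the iteration. It is off by $\pm 1$ whenever $n\equiv 0,1 \pmod 3$. For instance, \eqref{d3e} gives $DE_e(3)=1$ and $DE_e(4)=3$, while \eqref{t5e} gives $2$ and $2$.

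Your planned comparison of small values with \eqref{t5e} would therefore show odd discrepancies at $n=3,4$. These have nothing to do with $P_e$, and you should not let them push you into excluding extra values of $n$. The telescoped relation $DE_e(n)+DE_e(n-3)=reg_4(n-1)$ that you actually use, for $n\ge 6$, is still true because the errors cancel. But it should rest on \eqref{an1}, not on \eqref{t5e}.
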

From Theorem \ref{t10} \((i)\), following congruences follow immeditely:
\begin{align*}
	&\text{For}\; n = 2, j = 0, \quad DE_{e}(0) + DE_{e}(3)\equiv0\pmod{2},\\
	&\text{For}\; n = 3, j = 0,\quad DE_{e}(1) + DE_{e}(4)\equiv 1 \pmod{2},\\
	& \text{For}\; n = 4, j = 0, 1,\quad DE_{e}(2) + DE_{e}(5) \equiv 0 \pmod{2},\\
	&\text{For}\; n = 5, j = 0, 1,\quad DE_{e}(3) + DE_{e}(6) \equiv 0\pmod{2},\\
	&\text{For}\; n = 6, j = 0, 1, \quad DE_{e}(4) + DE_{e}(7) \equiv 1\pmod{2},\\
	&\text{For}\; n = 7, j = 0, 1,\quad DE_{e}(5) + DE_{e}(8) \equiv 0 \pmod{2}.
\end{align*}
From Theorem \ref{t10} \((ii)\), following congruences follow immeditely:
\begin{align*}
&\text{For}\; n = 0, j = 0, \quad DE_{e1}(0) + DE_{e1}(1)\equiv1\pmod{2},\\
&\text{For}\; n = 1, j = 0, \quad DE_{e1}(1) + DE_{e1}(2)\equiv1\pmod{2},\\
&\text{For}\; n = 2, j = 0, 1, \quad DE_{e1}(2) + DE_{e1}(3)\equiv 0 \pmod{2},\\
& \text{For}\; n = 3, j = 0, 1, \quad DE_{e1}(3) + DE_{e1}(4) \equiv 1 \pmod{2},\\
&\text{For}\; n = 4, j = 0,  1\, \quad DE_{e1}(4) + DE_{e1}(5) \equiv 0 \pmod{2},\\
&\text{For}\; n = 5, j = 0,  1, \quad DE_{e1}(5) + DE_{e1}(6) \equiv 0 \pmod{2},\\
&\text{For}\; n = 6, j = 0, 1, \quad DE_{e1}(6) + DE_{e1}(7) \equiv 1 \pmod{2}.
\end{align*}
From the above congruences derived from Theorem \ref{t10},  we have the following corollary:
\begin{corollary}\label{c12} We have
\begin{enumerate}
    \item [\((i)\)] For \(n\geq 2\),
  \(DE_{e}(n+1) \equiv DE_{e}(n-2) \pmod{2}\) if and only if $n\ne T_j$,
    \item [\((ii)\)] For \(n\geq0\),
  \(DE_{e1}(n) \equiv DE_{e1}(n+1) \pmod{2}\) if and only if $n\ne T_j$.
\end{enumerate}
\end{corollary}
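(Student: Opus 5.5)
The plan is to go through closed-form generating functions rather than through the finite congruences listed above. First I will find a product form for $\sum DE_e(n)q^n$ and for $\sum DE_{e1}(n)q^n$. Theorem \ref{t5} gives $DE_e(n)=\sum_{i\ge0}(-1)^i reg_4(n-1-3i)$ for $n\ge3$. At the level of series this says that
$$\sum_{n\ge0}DE_e(n)q^n=\frac{q}{1+q^3}\frac{(q^4;q^4)_\infty}{(q;q)_\infty}+P(q),$$
where $P(q)$ is a polynomial of degree at most $2$ correcting the initial terms. I will fix $P$ from $DE_e(0)=1$, $DE_e(1)=0$ and $DE_e(2)=1$.

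Similarly, Theorem \ref{t6} gives
$$\sum_{n\ge0}DE_{e1}(n)q^n=\frac{q}{1+q}\frac{(q^4;q^4)_\infty}{(q;q)_\infty}+Q(q),$$
with $Q$ a polynomial of degree at most $1$. Since $DE_{e1}(0)=DE_{e1}(1)=0$, I expect $Q=0$.

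Next I reduce modulo $2$. We have $(q^4;q^4)_\infty\equiv(q;q)_\infty^4$ and $1+q^3\equiv 1-q^3$. Hence
$$(1+q^3)\sum_{n\ge0} DE_e(n)q^n\equiv q\,(q;q)_\infty^3+(1+q^3)P(q)\pmod 2.$$
By Jacobi's identity, $(q;q)_\infty^3=\sum_{j\ge0}(-1)^j(2j+1)q^{T_j}\equiv\sum_{j\ge0} q^{T_j}\pmod 2$. Comparing the coefficient of $q^{n+1}$ on both sides gives
$$DE_e(n+1)+DE_e(n-2)\equiv \chi(n\text{ is triangular})+[\text{correction from }P]\pmod 2.$$
For $n\ge2$ the index $n+1$ is at least $3$, so the polynomial $(1+q^3)P$ (degree at most $5$) may still contribute. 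I will check directly that these contributions vanish mod $2$ for $n\ge2$, or else absorb them by a finite check using the listed values for $n=2,\dots,7$.

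For (ii), multiplying the second identity by $1+q$ gives
$$(1+q)\sum_{n\ge0} DE_{e1}(n)q^n\equiv q(q;q)_\infty^3\equiv\sum_{j\ge0} q^{T_j+1}\pmod 2.$$
Reading off the coefficient of $q^{n+1}$ yields $DE_{e1}(n+1)+DE_{e1}(n)\equiv 1$ exactly when $n=T_j$, which is (ii).

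The main obstacle is the boundary bookkeeping in (i). Theorem \ref{t5} holds only for $n\ge3$, so the low-order corrections from $P$ must be computed exactly. I also need to confirm they do not spoil the coefficients for $n\ge2$, where the relevant powers are $q^{3},\dots,q^{5}$ and so lie inside the range of $(1+q^3)P$. If this fails, I will instead derive the product form of $\sum DE_e(n)q^n$ directly from \eqref{d3e} using \eqref{ie4}, with $\alpha=-q^2$, $\beta=q$ and base $q^2$, which should give the exact rational prefactor without reference to initial terms. The listed small congruences serve as a consistency check on the final formula.
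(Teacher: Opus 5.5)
Your parity input is sound and differs from the paper's route. You get $reg_4(n)$ odd exactly for triangular $n$ from $(q^4;q^4)_\infty/(q;q)_\infty\equiv(q;q)_\infty^3$ and Jacobi's cube identity. The paper reads the same fact off \eqref{pt1011e4}, using $\sum_n(-1)^nq^{2n^2}\equiv 1 \pmod 2$, and then only lists small cases. The gap is in the boundary bookkeeping, and the correction is not a polynomial. Theorem~\ref{t5} as printed is false: iterating \eqref{pt5e4} leaves a terminal term $\pm DE_e(0)$ or $\pm DE_e(1)$, which the paper drops. For instance, $DE_e(3)=1$ (only $2+1$) while the formula gives $reg_4(2)=2$, and $DE_e(4)=3$ while it gives $reg_4(3)-reg_4(0)=2$.

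Write $G(q)=\sum_{n\ge0}reg_4(n)q^n$. The exact identity is
$(1+q^3)\sum_{n\ge0}DE_e(n)q^n=qG(q)+1-q$.
It is displayed in the proof of Theorem~\ref{t5}, and it is what your \eqref{ie4} computation gives once the sum there is taken over $n\ge0$. So the correction is $(1-q)/(1+q^3)$. Your primary recipe would instead give $P=1-q$, hence spurious terms $q^3-q^4$ in $(1+q^3)P$ and the wrong parity at $n=2,3$. These terms do not vanish. Patching $n=2,3$ by a finite check is not a proof either: the identity you would then use for larger $n$ rests on a false premise, and only happens to agree with the true one beyond $q^4$. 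Make the \eqref{ie4} route the argument. Comparing coefficients of $q^{n+1}$ for $n\ge2$ gives $DE_e(n+1)+DE_e(n-2)=reg_4(n)$ exactly, and (i) follows.

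For (ii), the same slip changes the conclusion. The sum in Theorem~\ref{t6} stops at $reg_4(1)$, so it encodes $\sum_{n\ge0}DE_{e1}(n)q^n=\frac{q}{1+q}\bigl(G(q)-1\bigr)$. Equivalently, $(1+q)\sum_{n\ge0}DE_{e1}(n)q^n=qG(q)-q$, which is \eqref{pt6e3}. The term $-q/(1+q)$ is not a polynomial. Even within your framework, $Q=0$ contradicts $DE_{e1}(1)=0$, since the $q^1$-coefficient of $\frac{q}{1+q}G(q)$ is $reg_4(0)=1$. Reading off $q^{n+1}$ therefore gives $DE_{e1}(n+1)+DE_{e1}(n)=reg_4(n)$ only for $n\ge1$, which is why \eqref{pt6e4} carries that restriction. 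For $n=0$ the left side is $DE_{e1}(0)+DE_{e1}(1)=0$. Because the paper counts $T_0=0$ as triangular, statement (ii) is in fact false at $n=0$; the listed congruence $DE_{e1}(0)+DE_{e1}(1)\equiv1\pmod 2$ is wrong. So your argument cannot prove (ii) as stated. With the exact identity it proves (ii) for all $n\ge1$, which is the correct form of the statement.
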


\begin{theorem}\label{t14}
   If \(k\in\mathbb{Z}\), \(n\neq k^2\) and \(2k^2\),   then
   \begin{enumerate}
       \item[\((i)\)] For \(n\geq j+2,\) \(\sum_{n \geq 0}DE_{e}(n-j+1)a(j)\equiv3\sum_{n\geq 0}DE_{e}(n-j-2)a(j)\pmod4\),
       \item[\((ii)\)] For \(n\geq j,\) \(\sum_{n \geq 0}DE_{e1}(n-j+1)a(j)\equiv3\sum_{n\geq 0}DE_{e1}(n-j)a(j)\pmod4\).
   \end{enumerate}
\end{theorem}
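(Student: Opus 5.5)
The approach is to turn Theorems \ref{t5} and \ref{t6} into generating function identities, multiply by the cubic partition generating function \eqref{ce1}, and reduce the resulting eta-quotient modulo $4$ to a product of two theta series. These are sparse modulo $4$, being supported on squares and twice squares.

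\textbf{Step 1: generating functions.} Theorem \ref{t5} says that, for $n\ge 3$, the $q^n$-coefficient of $(1+q^3)\sum_{n\ge0}DE_e(n)q^n$ equals $reg_4(n-1)$. So by \eqref{regg} with $t=4$,
\begin{equation*}
(1+q^3)\sum_{n\ge0}DE_e(n)q^n=\frac{q\,\qPoch{q^4}{q^4}{\infty}}{\qPoch{q}{q}{\infty}}+P(q),
\end{equation*}
where $P$ is a polynomial of degree $\le 2$. We read off $P$ from the initial values $DE_e(0),DE_e(1),DE_e(2)$ and the series \eqref{d3e}. Similarly, Theorem \ref{t6} gives
\begin{equation*}
(1+q)\sum_{n\ge0}DE_{e1}(n)q^n=\frac{q\,\qPoch{q^4}{q^4}{\infty}}{\qPoch{q}{q}{\infty}}+R(q),
\end{equation*}
with $R$ of degree $\le 1$, computed from \eqref{d4e} with $k=1$. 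These polynomial corrections are what the lower-bound hypotheses on $n$ and $j$ in the statement are meant to absorb. I expect this bookkeeping to be the main nuisance. In the first pass I would check directly that the corrections do not affect the coefficients in the range claimed, and restrict the range if they do.

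\textbf{Step 2: reduction modulo $4$.} Multiply both identities by $\sum a(j)q^j=1/(\qPoch{q}{q}{\infty}\qPoch{q^2}{q^2}{\infty})$. The main term becomes
\begin{equation*}
\frac{q\,\qPoch{q^4}{q^4}{\infty}}{\qPoch{q}{q}{\infty}^2\qPoch{q^2}{q^2}{\infty}}.
\end{equation*}
Since $\qPoch{q}{q}{\infty}^4\equiv\qPoch{q^2}{q^2}{\infty}^2\pmod 4$, we have $1/\qPoch{q}{q}{\infty}^2\equiv \qPoch{q}{q}{\infty}^2/\qPoch{q^2}{q^2}{\infty}^2\pmod 4$. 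Hence the main term is congruent to
\begin{equation*}
q\cdot\frac{\qPoch{q}{q}{\infty}^2}{\qPoch{q^2}{q^2}{\infty}}\cdot\frac{\qPoch{q^4}{q^4}{\infty}}{\qPoch{q^2}{q^2}{\infty}^2}=q\,\frac{\varphi(-q)}{\varphi(-q^2)},
\end{equation*}
where $\varphi(-q)=\sum_{k\in\mathbb{Z}}(-1)^kq^{k^2}=\qPoch{q}{q}{\infty}^2/\qPoch{q^2}{q^2}{\infty}$ is Gauss's identity. Since $\varphi(-q^2)=1+2\sum_{k\ge1}(-1)^k2q^{2k^2}$, squaring gives $\varphi(-q^2)^2\equiv1\pmod 4$. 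Thus $1/\varphi(-q^2)\equiv\varphi(-q^2)$, and
\begin{equation*}
\frac{\varphi(-q)}{\varphi(-q^2)}\equiv\varphi(-q)\varphi(-q^2)\equiv 1+2\sum_{k\ne0}(-1)^kq^{k^2}+2\sum_{k\ne0}(-1)^kq^{2k^2}\pmod 4.
\end{equation*}
Therefore the coefficient of $q^m$ vanishes modulo $4$ whenever $m\ge1$ is neither a square nor twice a square.

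\textbf{Step 3: comparing coefficients.} Compare coefficients of $q^{n+1}$ in
\begin{equation*}
(1+q^3)\Big(\sum_{n\ge0} DE_e(n)q^n\Big)\Big(\sum_{j\ge0} a(j)q^j\Big).
\end{equation*}
This gives $\sum_j a(j)\big(DE_e(n+1-j)+DE_e(n-2-j)\big)$, which is congruent to the $q^n$-coefficient of $\varphi(-q)\varphi(-q^2)$ modulo $4$, apart from the polynomial correction. For $n\ne k^2,2k^2$ this is $\equiv0\pmod 4$, so the first sum is $\equiv -(\text{second sum})\equiv 3\cdot(\text{second sum})\pmod 4$. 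This proves (i).

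Part (ii) follows in the same way from the $DE_{e1}$ identity with the factor $(1+q)$. Here one compares the coefficient of $q^{n+1}$, which yields the sums over $DE_{e1}(n+1-j)$ and $DE_{e1}(n-j)$.
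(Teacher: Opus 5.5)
\textbf{There is a genuine gap in Step 3, and it cannot be closed: the statement as printed is false.}

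Your Step 2 is correct and is a different route to the paper's congruence \eqref{pt14e1}. The paper proves the exact identity $\mathrm{A}(q)=\varphi(q)\varphi(q^2)\mathrm{A}(q^2)^2$, iterates it to $\varphi(q)\prod_{i\ge1}\varphi(q^{2^i})^{3\cdot2^{i-1}}$, and then reduces modulo $4$. You instead use $\qPoch{q}{q}{\infty}^4\equiv\qPoch{q^2}{q^2}{\infty}^2\pmod 4$ and Gauss's identity to get $\mathrm{A}(q)\equiv\varphi(-q)\varphi(-q^2)\pmod 4$, which is the same congruence. Your route is shorter; the paper's exact product has the advantage that it is reused modulo $8$. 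Two slips in your displays: they should read $\varphi(-q^2)=1+2\sum_{k\ge1}(-1)^kq^{2k^2}$ and $1+2\sum_{k\ge1}(\cdots)$. As written, $1+2\sum_{k\ne0}(\cdots)$ is $\equiv1\pmod 4$.

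The gap is in Step 3, together with your claim in Step 1 that the lower bounds on $n$ and $j$ absorb the polynomial corrections. The corrections are explicit. From \eqref{an1} and \eqref{an2}, as in the proofs of Theorems \ref{t5} and \ref{t6},
\begin{align*}
(1+q^3)\sum_{n\ge0}DE_e(n)q^n&=\frac{q\,\qPoch{q^4}{q^4}{\infty}}{\qPoch{q}{q}{\infty}}+1-q,\\
(1+q)\sum_{n\ge0}DE_{e1}(n)q^n&=\frac{q\,\qPoch{q^4}{q^4}{\infty}}{\qPoch{q}{q}{\infty}}-q.
\end{align*}
Take these from there rather than from the statement of Theorem \ref{t5}: as printed it gives $DE_e(3)=reg_4(2)=2$, whereas \eqref{d3e} gives $DE_e(3)=1$.

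Once you multiply by $\sum_j a(j)q^j$, the correction terms $(1-q)\sum_j a(j)q^j$ and $-q\sum_j a(j)q^j$ are no longer polynomials. They contribute to every coefficient:
\begin{align*}
\sum_{j\ge0}a(j)\bigl(DE_e(n+1-j)+DE_e(n-2-j)\bigr)&=[q^n]\mathrm{A}(q)+a(n+1)-a(n),\\
\sum_{j\ge0}a(j)\bigl(DE_{e1}(n+1-j)+DE_{e1}(n-j)\bigr)&=[q^n]\mathrm{A}(q)-a(n).
\end{align*}
If you truncate to $j\le n-2$ in (i), as the hypothesis suggests, the boundary term becomes $-a(n)-a(n-1)$ instead. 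Since $a(n)=1,1,3,4,9,12,23,\dots$, these boundary terms are not $0$ modulo $4$ in general. They are present for every $n$, so no lower bound on $n$ can absorb them.

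The statement in fact fails. Use $DE_e(0),\dots,DE_e(4)=1,0,1,1,3$ and $DE_{e1}(0),\dots,DE_{e1}(7)=0,0,1,1,2,2,4,5$.
\begin{itemize}
\item[(i)] At $n=3$: $DE_e(4)a(0)+DE_e(3)a(1)=4$, and the untruncated sum is $16$. On the other side, $3\bigl(DE_e(1)a(0)+DE_e(0)a(1)\bigr)=3$. So in either reading the two sides are $0$ and $3$ modulo $4$.
\item[(ii)] At $n=6$: $\sum_{j=0}^{6}DE_{e1}(7-j)a(j)=44$, while $3\sum_{j=0}^{6}DE_{e1}(6-j)a(j)=75$.
\end{itemize}
So Step 3 cannot be completed. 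What your computation actually proves is this: for $n\ne k^2,2k^2$, the left sides of the two displayed identities are congruent modulo $4$ to $a(n+1)-a(n)$ and to $-a(n)$, respectively. The paper's proof just substitutes \eqref{pt5e4} and \eqref{pt6e4} into \eqref{pt14e1} and compares coefficients. It drops exactly the same boundary terms, so it does not supply the missing step either.
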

\begin{theorem}\label{t15}
     If \(k,l\in\mathbb{Z}\),  \(n\neq k^2,\) \(2k^2,\) \(4k^2,\) and \(k^2+2l^2\), then
    \begin{enumerate}
      \item[\((i)\)] For \(n\geq j+2,\) \(\sum_{n \geq 0}DE_{e}(n-j+1)a(j)\equiv3\sum_{n\geq 0}DE_{e}(n-j-2)a(j)\pmod8\),
      \item[\((ii)\)] For \(n\geq j,\)  \(\sum_{n \geq 0}DE_{e1}(n-j+1)a(j)\equiv3\sum_{n\geq 0}DE_{e1}(n-j)a(j)\pmod8\).
    \end{enumerate}
    \end{theorem}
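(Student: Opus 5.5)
The plan is to convert the linear relations of Theorems \ref{t5} and \ref{t6} into generating-function identities, multiply by the cubic partition generating function \eqref{ce1}, and reduce the resulting eta quotient modulo $8$ to a product of theta functions whose support is sparse. Write $F_e(q)=\sum_{n\ge0}DE_e(n)q^n$ and $F_{e1}(q)=\sum_{n\ge0}DE_{e1}(n)q^n$.

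\textbf{Step 1: generating-function form of Theorems \ref{t5} and \ref{t6}.} The alternating sum in \eqref{t5e} runs in steps of $3$, so it is exactly what multiplication by $1+q^3$ undoes. Hence, by \eqref{regg} with $t=4$,
\[
(1+q^3)F_e(q)=q\,\frac{(q^4;q^4)_\infty}{(q;q)_\infty}+P_e(q),
\]
where $P_e(q)$ is a polynomial supported in low degrees (coming from $n<3$). In the same way \eqref{t6e} gives
\[
(1+q)F_{e1}(q)=q\,\frac{(q^4;q^4)_\infty}{(q;q)_\infty}+P_{e1}(q)
\]
for a polynomial $P_{e1}$. The hypotheses $n\ge j+2$ and $n\ge j$ are there to keep us in the range where these polynomial corrections do not contribute.

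\textbf{Step 2: multiply by \eqref{ce1} and identify theta functions.} Multiplying by $1/((q;q)_\infty(q^2;q^2)_\infty)$ gives
\[
G(q)=\frac{(q^4;q^4)_\infty}{(q;q)_\infty^2(q^2;q^2)_\infty}.
\]
Using $\varphi(-q)=(q;q)_\infty^2/(q^2;q^2)_\infty$ and $\varphi(-q^2)=(q^2;q^2)_\infty^2/(q^4;q^4)_\infty$, this becomes
\[
G(q)=\frac{1}{\varphi(-q)\,\varphi(-q^2)}.
\]
Gauss' relation $\varphi(q)\varphi(-q)=\varphi(-q^2)^2$ then gives
\[
G(q)=\frac{\varphi(q)}{\varphi(-q^2)^3}=\frac{\varphi(q)\,\varphi(-q^2)}{\varphi(-q^2)^4}.
\]
Every theta series here has the form $1+2X$ with $X$ integral, and
\[
(1+2X)^4=1+8X+24X^2+32X^3+16X^4\equiv1\pmod 8.
\]
Therefore
\[
G(q)\equiv\varphi(q)\varphi(-q^2)=\Bigl(1+2\sum_{k\ge1}q^{k^2}\Bigr)\Bigl(1+2\sum_{l\ge1}(-1)^lq^{2l^2}\Bigr)\pmod 8.
\]
The coefficient of $q^m$ on the right vanishes unless $m$ is of the form $k^2$, $2k^2$ or $k^2+2l^2$. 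Those are exactly the excluded values, with $4k^2$ excluded as a harmless extra.

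\textbf{Step 3: read off coefficients.} For $n$ outside the excluded set, compare the coefficient of $q^{n+1}$ on both sides of
\[
(1+q^3)F_e(q)\,C(q)=qG(q)+P_e(q)C(q),\qquad C(q)=\sum_{j\ge0}a(j)q^j .
\]
This gives
\[
\sum_j DE_e(n+1-j)a(j)+\sum_j DE_e(n-2-j)a(j)\equiv0\pmod 8.
\]
The analogous computation with $(1+q)$ gives part (ii).

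\textbf{Main obstacle.} The congruence in the statement is written with the factor $3$, i.e. $S_1\equiv 3S_2$, while Step 3 yields $S_1\equiv -S_2$. The two forms agree modulo $4$, since $-1\equiv3\pmod 4$; this also explains Theorem \ref{t14}, which follows from the same computation using $\varphi^2\equiv1$ on the relevant pieces. Modulo $8$, however, $S_1\equiv 3S_2$ differs from $S_1\equiv -S_2$ by $4S_2$. To reconcile them I would try to show $S_2\equiv0\pmod 2$ for these $n$, via the mod-$2$ analogue $G(q)\equiv1\pmod 2$ together with Corollary \ref{c12}. If that fails, the correct mod-$8$ statement is $S_1\equiv 7S_2$, and I would record it in that form. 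I would also need to track the polynomial corrections $P_e$ and $P_{e1}$ carefully for small $n$.
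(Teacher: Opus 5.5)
\textbf{Step 2 is fine.} Your reduction $G=\varphi(q)\varphi(-q^2)/\varphi(-q^2)^4\equiv\varphi(q)\varphi(-q^2)\pmod 8$ is correct. It is tidier than the paper's route, which iterates $A(q)=\varphi(q)\varphi(q^2)A(q^2)^2$ into an infinite product and truncates it to $\varphi(q)\varphi(q^2)^3\varphi(q^4)^6$. It gives the same sparse support at once and shows that the exclusion $4k^2$ is redundant.

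\textbf{The gap is in Step 3.} Here $P_e(q)=1-q$ and $P_{e1}(q)=-q$. What you must account for is the whole series $P_e(q)C(q)$, resp.\ $P_{e1}(q)C(q)$. Its coefficient of $q^{n+1}$ is $a(n+1)-a(n)$, resp.\ $-a(n)$, for every $n$, not only for small $n$. Comparing coefficients of $q^{n+1}$ correctly gives
\[
S_1+S_2=[q^n]G(q)-a(n)\ \text{ in (ii)},\qquad S_1+S_2=[q^n]G(q)+a(n+1)-a(n)\ \text{ in (i)}.
\]
The hypotheses $n\ge j$ and $n\ge j+2$ do not remove these terms. The recurrences \eqref{pt6e4} and \eqref{pt5e4} express $reg_4(m)$ through $DE$ values only for $m\ge1$, resp.\ $m\ge2$. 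So the convolution $\sum_j reg_4(n-j)a(j)$ always keeps the unmatched term $reg_4(0)a(n)$, and in (i) also $reg_4(1)a(n-1)$. For example, with $j\le n-2$ in (i) you get $[q^n]G-a(n)-a(n-1)$.

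\textbf{Consequence: the statement is false, and neither of your repairs can work.} Suppose $n$ is outside the excluded set and $a(n)$ is odd. Then (ii) gives $S_1+S_2$ odd, so $S_1\equiv cS_2\pmod 8$ fails for every odd $c$. Hence showing $S_2$ even, or switching to $c=7$, cannot succeed. A concrete instance is (ii) at $n=7$, which is not of the form $k^2+2l^2$. Using
\[
DE_{e1}(0),\dots,DE_{e1}(8)=0,0,1,1,2,2,4,5,7,\qquad a(0),\dots,a(7)=1,1,3,4,9,12,23,31,
\]
one finds $S_1=85$ and $S_2=44$. Then $S_1+S_2=129=160-31$, where $[q^7]G=160\equiv0\pmod 8$. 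So $S_1\equiv5$ while $3S_2\equiv7S_2\equiv4\pmod 8$. This also breaks Theorem~\ref{t14}(ii) modulo $4$. Likewise (i) fails at $n=5$, where \eqref{d3e} gives $S_1=53$ and $S_2=6$.

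\textbf{The paper's proof has the same hole.} It substitutes \eqref{pt5e4} and \eqref{pt6e4} into \eqref{ff} without the boundary terms it did keep in \eqref{pt89e7} and \eqref{pt89e8}. It then writes $3$ where $-1$ belongs, so your suspicion about the factor $3$ was justified.

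\textbf{What your argument does prove.} Step 2 gives $\sum_j reg_4(n-j)a(j)\equiv0\pmod 8$ for $n\ne k^2+2l^2$. Equivalently,
\[
\sum_{j=0}^{n}\bigl(DE_{e1}(n-j+1)+DE_{e1}(n-j)\bigr)a(j)\equiv -a(n)\pmod 8,
\]
and, with full ranges of summation, $S_1+S_2\equiv a(n+1)-a(n)\pmod 8$ in (i).
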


\section{Proofs of Theorems \ref{t1} - \ref{t15}}

\noindent {\it Proof of the Theorem \ref{t1}:} We have
\begin{align}
\sum_{n \geq 0} \dfrac{\qPoch{q}{q}{2n+k-1} q^{2n}}{\qPoch{q^4}{q^4}{n}} 
&= \qPoch{q}{q}{\infty} \sum_{n \geq 0} \dfrac{q^{2n}}{\qPoch{q^4}{q^4}{n} \qPoch{q^{2n+k}}{q}{\infty}} \notag\\
&= \qPoch{q}{q}{\infty} \sum_{n \geq 0} \dfrac{q^{2n}}{\qPoch{q^4}{q^4}{n}} \sum_{m \geq 0} \dfrac{q^{(2n+k)m}}{\qPoch{q}{q}{m}} \notag\\
&= \qPoch{q}{q}{\infty} \sum_{m \geq 0} \dfrac{q^{km}}{\qPoch{q}{q}{m}} \sum_{n \geq 0} \dfrac{q^{(2m+2)n}}{\qPoch{q^4}{q^4}{n}} \notag\\
&= \qPoch{q}{q}{\infty} \sum_{m \geq 0} \dfrac{q^{km}}{\qPoch{q}{q}{m}} \dfrac{1}{\qPoch{q^{2m+2}}{q^4}{\infty}} \notag\\
&= \qPoch{q}{q}{\infty} \sum_{m \geq 0} \left( \dfrac{q^{2mk}}{\qPoch{q}{q}{2m}\qPoch{q^{4m+2}}{q^4}{\infty}} 
    + \dfrac{q^{(2m+1)k}}{\qPoch{q}{q}{2m+1}\qPoch{q^{4m+4}}{q^4}{\infty}} \right)\notag \\
&= \dfrac{\qPoch{q}{q}{\infty}}{\qPoch{q^2}{q^4}{\infty}} \sum_{m \geq 0} \frac{q^{2mk}\qPoch{q^2}{q^4}{m}}{\qPoch{q}{q}{2m}} 
    + \dfrac{\qPoch{q}{q}{\infty}}{\qPoch{q^4}{q^4}{\infty}} \sum_{m \geq 0} \dfrac{q^{(2m+1)k}\qPoch{q^4}{q^4}{m}}{\qPoch{q}{q}{2m+1}} \notag\\
&= \dfrac{\qPoch{q}{q}{\infty}}{\qPoch{q^2}{q^4}{\infty}} \sum_{m \geq 0} \dfrac{q^{2mk}\qPoch{-q}{q^2}{m}}{\qPoch{q^2}{q^2}{m}} 
    + \dfrac{\qPoch{q}{q}{\infty}}{\qPoch{q^4}{q^4}{\infty}} \sum_{m \geq 0} \dfrac{q^{(2m+1)k}\qPoch{-q^2}{q^2}{m}}{\qPoch{q}{q^2}{m+1}} \notag\\
&= \dfrac{\qPoch{q}{q}{\infty}}{\qPoch{q^2}{q^4}{\infty}}\cdot\dfrac{\qPoch{-q^{2k+1}}{q^2}{\infty}}{\qPoch{q^{2k}}{q^2}{\infty}} 
    + \dfrac{\qPoch{q}{q}{\infty}}{\qPoch{q^4}{q^4}{\infty}} \sum_{m \geq 0}DE_{\geq k}(m) q^m\notag\\
&\label{e1}= \dfrac{1}{1+q}\cdot\dfrac{\qPoch{q^2}{q^2}{k-1}}{\qPoch{-q^3}{q^2}{k-1}}  + \frac{\qPoch{q}{q}{\infty}}{\qPoch{q^4}{q^4}{\infty}} \sum_{m \geq 0}DE_{\geq k}(m) q^m.
\end{align}
Multiplying both sides of \eqref{e1} by $\dfrac{\qPoch{q^4}{q^4}{\infty}}{\qPoch{q}{q}{\infty}}$, we obtain
\begin{equation*}\label{r1}
\dfrac{\qPoch{q^4}{q^4}{\infty}}{\qPoch{q}{q}{\infty}} \sum_{n \geq 0}\dfrac{\qPoch{q}{q}{2n+k-1} q^{2n}}{\qPoch{q^4}{q^4}{n}} 
= \dfrac{\qPoch{q^4}{q^4}{\infty}}{(1+q)\qPoch{q}{q}{\infty}}\cdot \dfrac{\qPoch{q^2}{q^2}{k-1}}{\qPoch{-q^3}{q^2}{k-1}}  
    + \sum_{m \geq 0}DE_{\geq k}(m) q^m,
\end{equation*}
which is equivalent to
 \begin{align*}
\sum_{m \geq 0}DE_{\geq k}(m) q^m 
&= \dfrac{\qPoch{q^4}{q^4}{\infty}}{\qPoch{q}{q}{\infty}} \sum_{n \geq 0}\dfrac{\qPoch{q}{q}{2n+k-1} q^{2n}}{\qPoch{q^4}{q^4}{n}} - \dfrac{\qPoch{q^4}{q^4}{\infty}}{(1+q)\qPoch{q}{q}{\infty}}\cdot \dfrac{\qPoch{q^2}{q^2}{k-1}}{\qPoch{-q^3}{q^2}{k-1}} \\
&= \dfrac{\qPoch{q^4}{q^4}{\infty}}{\qPoch{q}{q}{\infty}} \sum_{n \geq 0}\dfrac{(q;q)_{k-1} (q^{k};q)_{2n}q^{2n}}{\qPoch{q^4}{q^4}{n}} - \dfrac{\qPoch{q^4}{q^4}{\infty}}{(1+q)\qPoch{q}{q}{\infty}}\cdot \dfrac{\qPoch{q^2}{q^2}{k-1}}{\qPoch{-q^3}{q^2}{k-1}} \\
&= \dfrac{\qPoch{q^4}{q^4}{\infty}(q;q)_{k-1} }{\qPoch{q}{q}{\infty}} \sum_{n \geq 0}\dfrac{(q^{k};q^2)_n (q^{k+1};q^2)_nq^{2n}}{\qPoch{q^4}{q^4}{n}} - \dfrac{\qPoch{q^4}{q^4}{\infty}}{(1+q)\qPoch{q}{q}{\infty}}\cdot \dfrac{\qPoch{q^2}{q^2}{k-1}}{\qPoch{-q^3}{q^2}{k-1}}. 
\end{align*}
Thus, the proof is complete.\\

\noindent{\it Proof of the Theorem \ref{t2}:} We have
\begin{align}
\sum_{n\geq 0} \dfrac{\qPoch{q}{q}{2n+k-1} q^{4n}}{\qPoch{q^4}{q^4}{n}} 
&= \qPoch{q}{q}{\infty} \sum_{n\geq 0} \dfrac{q^{4n}}{\qPoch{q^4}{q^4}{n} \qPoch{q^{2n+k}}{q}{\infty}}\notag \\
&= \qPoch{q}{q}{\infty} \sum_{n\geq 0} \dfrac{q^{4n}}{\qPoch{q^4}{q^4}{n}} 
   \sum_{m \geq 0} \dfrac{q^{2nm+mk}}{\qPoch{q}{q}{m}} \notag\\
&= \qPoch{q}{q}{\infty} \sum_{m\geq 0} \dfrac{q^{mk}}{\qPoch{q}{q}{m} \qPoch{q^{2m+4}}{q^4}{\infty}} \notag\\
&= \qPoch{q}{q}{\infty} \sum_{m\geq 0} \dfrac{q^{2mk}}{\qPoch{q}{q}{2m} \qPoch{q^{4m+4}}{q^4}{\infty}} 
   + \qPoch{q}{q}{\infty} \sum_{m\geq 0} \dfrac{q^{(2m+1)k}}{\qPoch{q}{q}{2m+1} \qPoch{q^{4m+6}}{q^4}{\infty}} \notag\\
&= \dfrac{\qPoch{q}{q}{\infty}}{\qPoch{q^4}{q^4}{\infty}} \sum_{m\geq 0} \dfrac{\qPoch{q^4}{q^4}{m} q^{2mk}}{\qPoch{q}{q}{2m}} 
   + \dfrac{\qPoch{q}{q}{\infty}}{\qPoch{q^6}{q^4}{\infty}} \sum_{m\geq 0} \dfrac{\qPoch{q^6}{q^4}{m} q^{(2m+1)k}}{\qPoch{q}{q}{2m+1}} \notag\\
&= \dfrac{\qPoch{q}{q}{\infty}}{\qPoch{q^4}{q^4}{\infty}} \sum_{m\geq 0} \dfrac{\qPoch{-q^2}{q^2}{m} q^{2mk}}{\qPoch{q}{q^2}{m}} 
   + \dfrac{q^k\qPoch{q}{q}{\infty}}{(1-q)\qPoch{q^6}{q^4}{\infty}} \sum_{m\geq 0} \dfrac{\qPoch{-q^3}{q^2}{m} q^{2mk}}{\qPoch{q^2}{q^2}{m}} \notag\\
&\label{e2}= \dfrac{\qPoch{q}{q}{\infty}}{\qPoch{q^4}{q^4}{\infty}} \sum_{m\geq 0} \dfrac{\qPoch{-q^2}{q^2}{m} q^{2mk}}{\qPoch{q}{q^2}{m}}
   + \dfrac{q^k\qPoch{q}{q}{\infty}}{(1-q)\qPoch{q^6}{q^4}{\infty}}\cdot \dfrac{\qPoch{-q^{3+2k}}{q^2}{\infty}}{\qPoch{q^{2k}}{q^2}{\infty}}. 
\end{align}
Multiplying both sides of \eqref{e2} by $\dfrac{q^k\qPoch{q^4}{q^4}{\infty}}{\qPoch{q}{q}{\infty}}$, we obtain
\begin{equation*}
\sum_{m\geq 0}DE_k(m)q^m
= \dfrac{q^k\qPoch{q^4}{q^4}{\infty}}{\qPoch{q}{q}{\infty}} \sum_{n\geq 0} \dfrac{\qPoch{q}{q}{2n+k-1} q^{4n}}{\qPoch{q^4}{q^4}{n}}-\dfrac{\qPoch{q^4}{q^4}{\infty}}{\qPoch{q}{q}{\infty}} \cdot \dfrac{q^{2k}\qPoch{q}{q}{\infty}}{(1-q)\qPoch{q^6}{q^4}{\infty}}\cdot \dfrac{\qPoch{-q^{3+2k}}{q^2}{\infty}}{\qPoch{q^{2k}}{q^2}{\infty}}.
\end{equation*}
Hence, the proof is complete.\\

\noindent{\it Proof of the Theorem \ref{t4}:} We have
\begin{align}
&\sum_{n \geq 0} \dfrac{\qPoch{q}{q}{2n+1}}{\qPoch{q^4}{q^4}{n}} q^{4n} \notag\\
&= \dfrac{1}{q^2} \left( \sum_{n \geq 0} \dfrac{\qPoch{q}{q}{2n+1}}{\qPoch{q^4}{q^4}{n}} q^{2n} - \sum_{n \geq 0} \dfrac{\qPoch{q}{q}{2n+2}}{\qPoch{q^4}{q^4}{n}} q^{2n}\right)
\notag\\
&= \dfrac{1}{q^2} \left(\sum_{n \geq 0} \dfrac{\qPoch{q^2}{q^2}{n} \qPoch{q^3}{q^2}{n}}{\qPoch{q^2}{q^2}{n} \qPoch{-q^2}{q^2}{n}} q^{2n} - \sum_{n \geq 0} \dfrac{\qPoch{q}{q}{2}\qPoch{q^3}{q^2}{n} \qPoch{q^4}{q^2}{n}}{\qPoch{q^2}{q^2}{n} \qPoch{-q^2}{q^2}{n}} q^{2n}\right)
\notag\\
&= \dfrac{1}{q^2} \left(\left( \sum_{n \geq 0} \dfrac{\qPoch{q^3}{q^2}{n}}{\qPoch{-q^2}{q^2}{n}} q^{2n} \right) \left( 1 - (1-q)(1-q^2) \right) + (1-q)(1-q^2)\left(\sum_{n \geq 0} \dfrac{\qPoch{q^3}{q^2}{n}}{\qPoch{-q^2}{q^2}{n}} q^{4n+2}\right) \right)
\notag\\
&= \dfrac{1}{q^2} \Big\{\left( \dfrac{1+q}{1+q^3}-\dfrac{\qPoch{q^3}{q^2}{\infty}}{\qPoch{-q^2}{q^2}{\infty}(1+q^3)}\right) \left( -q^3+q^2+q \right) \notag\\&\hspace{8cm}+ (1-q-q^2+q^3)\dfrac{q^2}{1-q}\left(\sum_{n \geq 0} \dfrac{\qPoch{q}{q}{2n+1}}{\qPoch{q^4}{q^4}{n}} q^{4n}\right) \Big\}
\notag\\
&\label{pt3e1}=\left(-\dfrac{1}{q}+\dfrac{1}{q^2}+\dfrac{1}{q^3}\right)\left(\dfrac{1+q}{1+q^3}-\dfrac{\qPoch{q^2}{q}{\infty}}{\qPoch{q^4}{q^4}{\infty}(1+q^3)}\right).
\end{align}
Employing  \eqref{t2e} in \eqref{pt3e1}, we obtain 
\begin{align}
\hspace{-1cm}\sum_{n\geq 0}DE_2(n)q^n&=q^2\left(-\dfrac{1}{q}+\dfrac{1}{q^2}+\dfrac{1}{q^3}\right)\left(\left(\dfrac{1+q}{1+q^3}\right)\dfrac{\qPoch{q^4}{q^4}{\infty}}{\qPoch{q}{q}{\infty}}-\dfrac{1}{(1+q^3)(1-q)}\right)
\notag\\
&\hspace{7cm}-\dfrac{q^4\qPoch{q}{q}{\infty}\qPoch{-q^7}{q^2}{\infty}\qPoch{q^4}{q^4}{\infty}}{(1-q)\qPoch{q}{q}{\infty}\qPoch{q^6}{q^4}{\infty}\qPoch{q^4}{q^2}{\infty}}
\notag\\
&\label{e4}=\dfrac{\qPoch{q^4}{q^4}{\infty}}{\qPoch{q}{q}{\infty}}\cdot\dfrac{q^2+2q^3-q^5+2q^8+q^9-q^{10}}{q^3(1+q^3)(1+q^5)}-\dfrac{q^2+q^3-q^5}{q^3(1+q^3)(1-q)}.
\end{align}Employing \eqref{regg} in \eqref{e4}, we obtain
$$\hspace{-5cm}q^3(1+q^3)(1+q^5)(1-q)\sum_{n\geq 0}DE_2(n)q^n=(1+q^5)(q^5-q^3-q^2)$$
\begin{equation}\label{e5}\hspace{3cm}+(1-q)(q^2+2q^3-q^5+2q^8+q^9-q^{10})\sum_{m\ge0}^{\infty}reg_4(n)q^{n}.
\end{equation}
Equating the coefficients of  \(q^n\) on both sides of \eqref{e5}, we arrive at the desired result.\\

\noindent{\it Proof of the Theorem \ref{t3}:} We have
\begin{align}\sum_{n \geq 0} \dfrac{\qPoch{q^3}{q^2}{n} \qPoch{q^4}{q^2}{n}}{\qPoch{q^4}{q^4}{n}} 
&= \dfrac{1}{(1-q^2)} \sum_{n \geq 0} \dfrac{\qPoch{q^3}{q^2}{n}}{\qPoch{-q^2}{q^2}{n}} \left( 1 - q^{2n+2} \right) q^{2n}\notag\\
&= \dfrac{1}{(1-q^2)} \left(\sum_{n \geq 0} \dfrac{\qPoch{q^3}{q^2}{n}}{\qPoch{-q^2}{q^2}{n}} q^{2n} - \sum_{n \geq 0} \dfrac{\qPoch{q^3}{q^2}{n}}{\qPoch{-q^2}{q^2}{n}} q^{4n+2} \right)
\notag\\
&\label{e6}= \dfrac{1}{(1-q^2)} \left(\sum_{n \geq 0} \dfrac{\qPoch{q^3}{q^2}{n}}{\qPoch{-q^2}{q^2}{n}} q^{2n} - \dfrac{q^2}{1-q} \sum_{n \geq 0} \dfrac{\qPoch{q}{q}{2n+1}}{\qPoch{q^4}{q^4}{n}} q^{4n} \right).
\end{align}
Employing \eqref{pt3e1} in \eqref{e6}, we obtain
$$\hspace{-5cm}
    \sum_{n \geq 0} \dfrac{\qPoch{q^3}{q^2}{n} \qPoch{q^4}{q^2}{n}}{\qPoch{q^4}{q^4}{n}} = \dfrac{1}{(1-q^2)} \Big\{-\dfrac{\qPoch{q^3}{q^2}{\infty}}{\qPoch{-q^2}{q^2}{\infty} (1+q^3)} + \dfrac{1+q}{1+q^3}$$\begin{equation}\label{pt4e1} \hspace{2.5cm}- \dfrac{q^2}{1-q}\left( -\dfrac{1}{q} + \dfrac{1}{q^2} + \dfrac{1}{q^3} \right)
    \left( \dfrac{1+q}{1+q^3}- \dfrac{1}{1+q^3} \cdot \dfrac{\qPoch{q^2}{q}{\infty}}{\qPoch{q^4}{q^4}{\infty}}\right)\Big\}.
\end{equation}
Employing \eqref{pt4e1} in  \eqref{t1e} and simplifying, we obtain
\[
\sum_{n\geq 0} DE_{\geq 3}(n) q^n= -\dfrac{1}{1+q^3} + \dfrac{1+q}{1+q^2} \cdot\dfrac{\qPoch{q^4}{q^4}{\infty}}{\qPoch{q^2}{q}{\infty}}- \dfrac{\qPoch{q^4}{q^4}{\infty}}{\qPoch{q}{q}{\infty}} \left( \dfrac{1}{q} + 1 - q \right) \dfrac{(1+q)}{(1+q^3)}
+ \dfrac{\left(\dfrac{1}{q} + 1 - q\right)}{(1+q^3)(1-q)} 
\]\[\hspace{4cm}
-
\dfrac{\qPoch{q^4}{q^4}{\infty}}{\qPoch{q}{q}{\infty}} \cdot \dfrac{(1-q)(1-q^4)}{(1+q^3)(1+q^5)}
\]
\begin{equation}\label{e7}\hspace{3cm}=\dfrac{\qPoch{q^4}{q^4}{\infty}}{\qPoch{q}{q}{\infty}}\left(\dfrac{-1-2q+q^2-q^5-q^7}{q(1+q^3)(1+q^5)}\right)-\dfrac{2}{1+q^3}+\dfrac{1}{q(1-q)(1+q^3)}.
\end{equation}
Employing \eqref{regg} in \eqref{e7} and simplifying, we obtain
\begin{equation}\label{e9}
q(1-q)(1+q^3)(1+q^5)\sum_{n \geq 0} DE_{\geq 3}(n) q^n =(-1-3q+3q^2-q^5+q^6)\sum_{m\ge0}^{\infty}reg_4(n)q^{n}-2(q-q^2-q^7)+1-q^5.
\end{equation}
Equating the coefficients of \(q^n\) on both sides of \eqref{e9}, we arrive at the desired result.\\

\noindent{\it Proof of the Theorem \ref{t5}:}
From  \eqref{ie9} and \eqref{d3e}, we obtain
\begin{equation}\label{pt5e1}
  \sum_{n \geq 0} DE_e(n) q^n = \dfrac{1}{q} \sum_{n \geq 0} \operatorname{DE3}(n) q^n.  
\end{equation}
Multiplying both sides of  \eqref{pt5e1} by \((1+q^3)\), we obtain 
\begin{equation}\label{pt5e2}
   (1+q^3) \sum_{n \geq 0} DE_e(n) q^n = \dfrac{(1+q^3)}{q} \sum_{n \geq 0} \operatorname{DE3}(n) q^n. 
\end{equation}
Now from \cite[Theorem 3, (2.6)]{DE}, we note that 
\begin{equation}\label{an1}(1+q^3)\sum_{n \geq 0} \operatorname{DE3}(n) q^n=q^2\dfrac{\qPoch{q^4}{q^4}{\infty}}{\qPoch{q}{q}{\infty}}-q^2+q.\end{equation}
Employing \eqref{an1} in \eqref{pt5e2}, we obtain
\[(1+q^3) \sum_{n \geq 0} DE_e(n) q^n = q\dfrac{\qPoch{q^4}{q^4}{\infty}}{\qPoch{q}{q}{\infty}} - q + 1.\]
Employing \eqref{regg} in \eqref{an1} and simplifying, we obtain
\begin{equation}\label{pt5e3}
 \sum_{n \geq 0} DE_e(n) q^n = \sum_{n \geq 0} reg_4(n) q^{n+1}-q+1- \sum_{n \geq 0} DE_e(n) q^{n+3} .   
\end{equation}
Extracting the coefficients of $q^n$ on both sides of \eqref{pt5e3}, we obtain
\begin{equation}\label{pt5e4}
 DE_e(n) = reg_4(n-1)- DE_e(n-3) , \quad \text{for } n \geq 3,   
\end{equation}
Iterating \eqref{pt5e4} in the following manner, we obtain
    \begin{align*}
        DE_e(n) &=reg_4(n-1)- DE_e(n-3)\\
                &=reg_4(n-1)-reg_4(n-4)+DE_e(n-6)\\
                &=reg_4(n-1)-reg_4(n-4)+reg_4(n-7)- DE_e(n-9)\\
                &\;\;\vdots\\
                &=\sum_{i=0}^{\left\lfloor \frac{n-1}{3} \right\rfloor} (-1)^ireg_4(n-1-3i).
    \end{align*}\\
Hence, the proof is complete.\\

\noindent{\it Proof of the Theorem \ref{t6}:}
From \eqref{ie7} and \eqref{d4e}  we obtain
\begin{equation}\label{pt6e1}
   \sum_{n \geq 0} DE_{e1}(n) q^n = q \sum_{n \geq 0} \operatorname{DE1}(n) q^n. 
\end{equation}
Multiplying both sides of  \eqref{pt6e1} by \((1+q)\), we obtain 
\begin{equation}\label{pt6e2}
   \dfrac{(1+q)}{q} \sum_{n \geq 0} DE_{e1}(n) q^n = (1+q) \sum_{n \geq 0} \operatorname{DE1}(n) q^n. 
\end{equation}
From \cite[Theorem 1 (2.2)]{DE}, we note that
\begin{equation}\label{an2}
	(1+q)\sum_{n \geq 0} \operatorname{DE1}(n) q^n=\dfrac{\qPoch{q^4}{q^4}{\infty}}{\qPoch{q}{q}{\infty}}-1
	\end{equation}
Employing \eqref{an2} in \eqref{pt6e2}, iwe obtain
\begin{equation}\label{pt6e3}
 \dfrac{(1+q)}{q} \sum_{n \geq 0} DE_{e1}(n) q^n = \sum_{n \geq 0} reg_4(n) q^{n}-1.   
\end{equation}
Equating the coefficients of $q^n$ on both sides of equation \eqref{pt6e3} and simplifying, we obtain 
\begin{equation}\label{pt6e4}
    DE_{e1}(n+1) = reg_4(n)-+ DE_{e1}(n), \quad \text{for } n \geq 1.   
\end{equation}
Iterating \eqref{pt6e4}, we obtain
 \begin{align}
        DE_{e1}(n+1)&=reg_4(n)- DE_{e1}(n)\notag\\
                    &=reg_4(n)- reg_4(n-1)+DE_{e1}(n-1)\notag\\
                    &=reg_4(n)- reg_4(n-1)+reg_4(n-2)- DE_{e1}(n-2)\notag\\
                    &\;\;\vdots\notag\\
                    &\label{e10}=\sum_{i=0}^{n-1} (-1)^ireg_4(n-i).
        \end{align}
  Replacing \(n\) by \(n-1\) in the \eqref{e10}, we arrive at the desired result.\\
    
\noindent{\it Proof of the Theorem \ref{t7}:}
It is easily seen that
\begin{equation}\label{pt7e1}
  \sum_{m\geq 0}\dfrac{\qPoch{-q^2}{q^2}{m}}{\qPoch{q}{q^2}{m}}q^{4m}
=\dfrac{1}{q^2}\left(\sum_{m\geq 0}\dfrac{\qPoch{-q^2}{q^2}{m}}{\qPoch{q}{q^2}{m+1}}q^{4m+2}-\sum_{m\geq 0}\dfrac{\qPoch{-q^2}{q^2}{m}}{\qPoch{q}{q^2}{m+1}}q^{6m+3}\right) 
\end{equation}
Employing \eqref{d5e} in \eqref{pt7e1}, we obtain
\begin{equation}\label{pt7e2}
  DE_{e\geq2}(n-2)=DE_{\geq2}(n)-DE_{\geq3}(n). 
\end{equation}
Now, \cite[Corollary 2]{DE} and using the fact that $DE2(n)=DE_{\ge2}(n)$,  we note that
\begin{equation}\label{kj} DE_{\geq2}(n)=reg_{4>1}(n)-DE_{\geq2}(n-3),
	\end{equation}where $reg_{4>1}(n)$ denotes the number of $4$-regular partitions of $n$ into parts each$>1$.\\
Employing \eqref{kj} in  (\ref{pt7e2}) and iterating, we obtain
\begin{align*}
DE_{e\geq2}(n-2)+DE_{\geq3}(n)&=DE_{\geq2}(n)\\
       &=reg_{4>1}(n)-DE_{\geq2}(n-3)\\
       &=reg_{4>1}(n)-reg_{4>1}(n-3)+DE_{\geq2}(n-6)\\
       &\;\;\vdots\\
       &=\sum_{i=0}^{n-1} (-1)^ireg_{4>1}(n-3i).
\end{align*}
Hence, the proof is complete.\\

\noindent{\it Proof of the Theorem \ref{t8}:} For any complex number $z\ne0$ and $|q|<1$, the Jacobi's triple product identity \cite[Theorem 11]{jacobi1} is given by 
\begin{equation}\label{pt89e1}
   \qPoch{q^2}{q^2}{\infty}\qPoch{-qz}{q^2}{\infty}\qPoch{-q/z}{q^2}{\infty}= \sum_{n=-\infty}^{\infty} z^n q^{n^2},.
\end{equation}
By replacing $q$ by $q^2$ and $z$ by $-q$ in \eqref{pt89e1}, we obtain
\begin{equation}\label{pt89e2}
    \qPoch{q^4}{q^4}{\infty}\qPoch{q^3}{q^4}{\infty}\qPoch{q}{q^4}{\infty}= \sum_{n=-\infty}^{\infty} (-1)^n q^{2n^2+n},
\end{equation}
which implies,
\begin{equation}\label{pt89e3}
\dfrac{\qPoch{q^4}{q^4}{\infty}^2}{\qPoch{q^2}{q^2}{\infty}} = \left(\sum_{n \geq 0} reg_4(n) q^n\right)\left(\sum_{n=0}^{\infty} (-1)^{\lceil \frac{n}{2} \rceil} q^{\frac{n(n+1)}{2}}\right),  
\end{equation}where $\lceil \cdot\rceil$ denotes the ceiling function. 
Again, in terms of Ramanujan theta function, the Jacobi's triple product identity is expressed as
\begin{equation}\label{pt89e4}
	\qPoch{-q}{qz}{\infty}\qPoch{-z}{zq}{\infty}\qPoch{zq}{zq}{\infty}= \sum_{n=-\infty}^{\infty} q^{\frac{n(n+1)}{2}} z^{\frac{n(n-1)}{2}}, \quad |qz| < 1. 
\end{equation}
Replacing $q$ by $q^2$ and $z$ by $1$ is \eqref{pt89e4} and simplifying, we obtain
\begin{equation}\label{pt89e5}
  \dfrac{\qPoch{q^4}{q^4}{\infty}^2}{\qPoch{q^2}{q^2}{\infty}} = \sum_{n=0}^{\infty} q^{n(n+1)}.  
\end{equation}
Employing \eqref{pt89e3} and \eqref{pt89e5}, we obtain
\begin{equation}\label{pt89e6}
\left(\sum_{n=0}^{\infty} reg_4(n) q^n \right) \left(\sum_{n=0}^{\infty} (-1)^{\lceil \frac{n}{2} \rceil} q^{\frac{n(n+1)}{2}} \right) = \sum_{n=0}^{\infty} q^{n(n+1)} . 
\end{equation}
Combining \eqref{pt5e4}, \eqref{pt6e4}, and \ref{pt89e6}, we deduce that
\begin{equation}\label{pt89e7}
\left(reg_4(0)+reg_4(1)+\sum_{n=2}^{\infty}\left( DE_e(n+1)+DE_e(n-2)\right) q^n \right) \left(\sum_{n=0}^{\infty} (-1)^{\lceil \frac{n}{2} \rceil} q^{\frac{n(n+1)}{2}} \right) = \sum_{n=0}^{\infty} q^{n(n+1)},  
\end{equation}  and
\begin{equation}\label{pt89e8}
 \left(reg_4(0)+\sum_{n=1}^{\infty}\left( DE_{e1}(n+1)+DE_{e1}(n)\right) q^n \right) \left(\sum_{n=0}^{\infty} (-1)^{\lceil \frac{n}{2} \rceil} q^{\frac{n(n+1)}{2}} \right) = \sum_{n=0}^{\infty} q^{n(n+1)}.
\end{equation}
Comparing the coefficients of \(q^n\) in (\ref{pt89e7}), and (\ref{pt89e8}), we complete the proof of  Theorems \ref{t8} (i) and (ii), respectively.\\

\noindent{\it Proof of the Theorem \ref{t10}:} Replacing \(q\) by \(-q^2\) and \(z\) by \(-q^2\) in \eqref{pt89e4}, we obtain
\begin{equation}\label{pt1011e1}
   \qPoch{q^2}{q^4}{\infty} \qPoch{q^2}{q^4}{\infty} \qPoch{q^4}{q^4}{\infty} = \sum_{n=-\infty}^{\infty} (-1)^n q^{2n^2}. 
\end{equation}
Simplifying \eqref{pt89e4} using \eqref{regg}, we obtain
\begin{equation}\label{pt1011e2}
\dfrac{ \qPoch{q^2}{q^2}{\infty}^2}{ \qPoch{q}{q}{\infty}} = \left(\sum_{n=0}^{\infty} reg_4(n) q^n\right) \left(\sum_{n=-\infty}^{\infty} (-1)^n q^{2n^2} \right). 
\end{equation}
Again, replacing \(z\) by \(1\) in \eqref{pt89e4} and simplifying, we obtain
\begin{equation}\label{pt1011e3}
\dfrac{ \qPoch{q^2}{q^2}{\infty}^2}{ \qPoch{q}{q}{\infty}}  = \sum_{n=0}^{\infty} q^{\frac{n(n+1)}{2}}.
\end{equation}
Employing \eqref{pt1011e3} in \eqref{pt1011e2}, we obtain
\begin{equation}\label{pt1011e4}
\left(\sum_{n=0}^{\infty} reg_4(n) q^n \right)\left(\sum_{n=-\infty}^{\infty} (-1)^n q^{2n^2}\right) = \sum_{n=0}^{\infty} q^{\frac{n(n+1)}{2}}.
\end{equation}
Combining \eqref{pt5e4}, \eqref{pt6e4}, and \eqref{pt1011e4}, we obtain
\begin{equation}\label{pt1011e5}
\left(reg_4(0)+reg_4(1)+\sum_{n=2}^{\infty}\left( DE_e(n+1)+DE_e(n-2)\right) q^n \right) \left(\sum_{n=-\infty}^{\infty} (-1)^n q^{2n^2}\right) = \sum_{n=0}^{\infty} q^{\frac{n(n+1)}{2}},  
\end{equation} and
\begin{equation}\label{pt1011e6}
\left(reg_4(0)+\sum_{n=1}^{\infty}\left( DE_{e1}(n+1)+DE_{e1}(n)\right) q^n \right) \left(\sum_{n=-\infty}^{\infty} (-1)^n q^{2n^2}\right) =\sum_{n=0}^{\infty} q^{\frac{n(n+1)}{2}}.
\end{equation}
Comparing the coefficients of \(q^n\) in both sides of \eqref{pt1011e5} and \eqref{pt1011e6}, we complete the proof of Theorem \ref{t10}(i) and (ii), respectively.\\

\noindent{\it Proof of the Theorem \ref{t14}:}
Set,
\[
\mathrm{A}(q) := \left(\sum_{n\ge0}reg_4(n)q^n\right)\left(\sum_{m\ge0}a(m)q^m\right)=\dfrac{\qPoch{q^4}{q^4}{\infty}}{\qPoch{q}{q}{\infty}^2\qPoch{q^2}{q^2}{\infty}}.
\]
Then it is easy to see that, 
\begin{equation}\label{he1}
\mathrm{A}(q) = \varphi(q)\varphi(q^2) \mathrm{A}(q^2)^2,
\end{equation}
where
\[
\varphi(q) := \sum_{k=-\infty}^{\infty} q^{k^2} 
= \dfrac{\qPoch{q^2}{q^2}{\infty}^5}{\qPoch{q}{q}{\infty}^2\qPoch{q^4}{q^4}{\infty}^2}.
\]
Iterating \eqref{he1} for infinite times, we obtain
\begin{align*}
    \mathrm{A}(q) &= \varphi(q) \varphi(q^2) \mathrm{A}(q^2)^2\\
     &=\varphi(q)\varphi(q^2) 
\bigl(\varphi(q^2)\varphi(q^4) \mathrm{A}(q^4)^2 \bigr)^2\\ 
&=\varphi(q)\varphi(q^2)^{3}\varphi(q^4)^{2} \mathrm{A}(q^4)^4\\
        &=\varphi(q)\varphi(q^2)^3\varphi(q^4)^{2}
\bigl(\varphi(q^4)\varphi(q^8) \mathrm{A}(q^8)^2 \bigr)^4\\
&=\varphi(q)\varphi(q^2)^3\varphi(q^4)^{6}\varphi(q^8)^{4} \mathrm{A}(q^8)^8\\
&\;\;\vdots\\
&= \varphi(q) \prod_{i\ge1} \varphi(q^{2^i})^{3\cdot2^{i-1}}.
\end{align*}
By repeating the process , we obtain
Again, from the Binomial theorem, it is clear that
\(
\varphi(q^{2i})^{3\cdot 2^{i-1}} \equiv 1 \pmod{4}
\)
for each $i \ge 2$. Thus,
\begin{align*}
   \mathrm{A}(q)
&= \varphi(q) \prod_{i\ge1} \varphi(q^{2^i})^{3\cdot2^{i-1}}\\
&\equiv\varphi(q)\varphi(q^2)^{3} \pmod{4}\\
&=\left(1 + 2 \sum_{n\ge1} q^{n^2}\right)
\left(1 + 2 \sum_{n\ge1} q^{2n^2}\right)^{3}\\
&=\left(1 + 2 \sum_{n\ge1} q^{n^2}\right)
\left( \sum_{k=0}^{3} {3 \choose k} 2^k 
\left(\sum_{n\ge1} q^{2n^2}\right)^k \right)\\
&= \left(1 + 2 \sum_{n\ge1} q^{n^2}\right)
\left(1 + 6 \sum_{n\ge1} q^{2n^2}\right)
\pmod{4}.
\end{align*}
Therefore, 
\begin{equation}\label{pt14e1}
  \left(\sum_{n\ge0}reg_4(n)q^n\right)\left(\sum_{m\ge0}a(m)q^m\right)\equiv 1 + 2 \sum_{n\ge1} q^{n^2}
+ 6 \sum_{n\ge1} q^{2n^2}
\pmod{4}. 
\end{equation}
Emplyoing equations \eqref{pt5e4} and (\ref{pt6e4}) in \eqref{pt14e1}, and then comparing coefficients of \(q^n\),  we arrive at the Theorem \ref{t14}  (i) and (ii), respectively.\\

\noindent{\it Proof of the Theorem \ref{t15}:}
From the Binomial theorem, it is clear that \(\varphi(q^{2i})^{3\cdot 2^{i-1}} \equiv 1 \pmod{8}\) for each \(i \geq 3\).
Thus,
\begin{align*}
\mathrm{A}(q)&= \varphi(q) \prod_{i \geq 1} \varphi(q^{2^i})^{3\cdot2^{i-1}} \\
&\equiv \varphi(q)\varphi(q^2)^{3}\varphi(q^4)^{6} \pmod{8} \\
&= \left(1 + 2\sum_{n \geq 1} q^{n^2}\right) \left(1 + 2\sum_{n \geq 1} q^{2n^2}\right)^{3} \left(1 + 2\sum_{n \geq 1} q^{4n^2}\right)^{6} \\
&= \left(1 + 2\sum_{n \geq 1} q^{n^2}\right) \left(\sum_{k=0}^{3} \binom{3}{k} 2^k \left(\sum_{n \geq 1} q^{2n^2}\right)^k\right)\left(\sum_{k=0}^{6} \binom{6}{k} 2^k \left(\sum_{n \geq 1} q^{4n^2}\right)^k\right) \\
&\equiv \left(1 + 2\sum_{n \geq 1} q^{n^2}\right) \left(1 + 6 \sum_{n \geq 1} q^{2n^2} + 12 \sum_{n \geq 1} q^{4n^2}\right)\;\;\;\;\left(1 + 12 \sum_{n \geq 1} q^{4n^2} + 12 \sum_{n \geq 1} q^{8n^2}\right) \pmod{8}. 
\end{align*}
Therefore, 
\begin{align}\label{pt15e1}
\left(\sum_{n\ge0}reg_4(n)q^n\right)\left(\sum_{m\ge0}a(m)q^m\right)
&\equiv1 + 2\sum_{n \geq 1} q^{n^2} + 6 \sum_{n \geq 1} q^{2(2n-1)^2}+ 18 \sum_{n \geq 1} q^{2(2n)^2}\quad\\
& \label{ff}\qquad+ 24 \sum_{n \geq 1} q^{4n^2} + 12\sum_{m,n \geq 1} q^{n^2+2m^2} \pmod{8}. 
\end{align}
Emplyoing  \eqref{pt5e4} and \eqref{pt6e4} in \eqref{ff}, and then comparing coefficients of \(q^n\), we arrive at Theorem \ref{t15} (i) and (ii), respectively.\\

\section{Concluding Remarks}
 We  established new \(q\)-series identities involving partition functions of \(DE_{\geq k}(n)\) and \(DE_{k}(n)\) and relations for \(DE_{\geq 3}(n)\), \(DE_{2}(n)\), \(DE_{e}(n)\), \(DE_{e1}(n)\) and \(DE_{e\geq 2}(n)\) with \(reg_4(n)\). Some congruence relations of \(DE_{e}(n)\), and \(DE_{e1}(n)\) are also proved for modulo \(2,\) \(4,\) and \(8.\) Moreover,  from \eqref{s2e1}, \eqref{s2e2}, \eqref{d4e}, and \eqref{d5e} it can be easily seen that \(DE_{ek}(n+k)=DE_{\geq k}(n)\) and \(DE_{e\geq k}(n-k)=DE_{k}(n)\). Interested readers may try to find new relations for \(DE_{e\geq k}\) and \(DE_{ek}(n)\) and possible generalisations.  
 
   \section*{\bf Declarations}
   
   \noindent{\bf Author Contributions.} Both authors contributed equally to this work.
   
 \noindent{\bf Conflict of Interest.} The authors declare that there is no conflict of interest regarding the publication of this article.
 
 \noindent{\bf Human and animal rights.} The authors declare that there is no research involving human participants or animals in the contained of this paper.	
 
 \noindent{\bf Data availability statements.} Data sharing not applicable to this article as no datasets were generated or analysed during the current study.

\end{document}